\newtheorem{thm}{Theorem}
\newtheorem{proof}{Proof}
\newtheorem{lemma}{Lemma}
\newtheorem{cor}{Corollary}
\begin{document}
\begin{center}
{\Large \bf Some Bounds on Zeroth-Order General Randi\'c Index}\\
\vspace{6mm}
{ Muhammad Kamran Jamil$^{0,1,4}$, Ioan Tomescu$^2$, Muhammad Imran$^{1,3,*}$}\\
\vspace{4mm}
{\it $^1$Department of Mathematical Sciences, United Arab University, Al Ain, United Arab Emirates.}\\
{\it $^2$Faculty of Mathematics and Computer Science, University of Bucharest, Bucharest, Romania.}\\
{\it $^3$Department of Mathematics, School of Natural Sciences, National University of Science and Technology, Islamabad.}\\
{\it $^4$Department of Mathematics, Riphah Institute of Computing and Applied Sciences, Riphah International University, Lahore, Pakistan.}\\
\vspace{2mm}
{m.kamran.sms@gmail.com, ioan@fmi.unibuc.ro, imrandhab@gmail.com}
\vspace{6mm}\\
\end{center}
\footnotetext{Corresponding author}
%\thanks{{\scriptsize
%\hskip -0.4 true cm MSC(2010): Primary: xxx; Secondary: yyy
%\newline Keywords: abcccccccccc.\\
%Received: dd mmmm yyyy, Accepted: dd mmmm yyyy.\\
%$*$Corresponding author}}

\textbf{Abstract:} For a graph $G$ without isolated vertices, the inverse degree of a graph $G$ is defined as $ID(G)=\sum_{u\in V(G)}d(u)^{-1}$ where $d(u)$ is the number of vertices adjacent to the vertex $u$ in $G$. By replacing $-1$ by any non-zero real number we obtain zeroth-order general Randi\'c index, i.e. $^0R_{\gamma}(G)=\sum_{u\in V(G)}d(u)^{\gamma}$ where $\gamma$ is any non-zero real number. In \cite{xd}, Xu et. al. determined some upper and lower bounds on the inverse degree for a connected graph $G$ in terms of chromatic number, clique number, connectivity, number of cut edges. In this paper, we extend their results and investigate if the same results hold for $\gamma<0$. The corresponding extremal graphs have been also characterized.

\section{Introduction}

Throughout this paper, we only consider finite, connected and simple graphs and for the terminologies on the graph theory not defined here one can see \cite{bm}.  Let $G$ be a graph with vertex set $V(G)$ and edge set $E(G)$. The number of elements in $V(G)$ and $E(G)$ are called the {\it order} and the {\it size} of $G$, respectively. For a vertex $u$ of $G$, $N_G(u)$ is the set of vertices adjacent to the vertex $u$ in $G$ and the number of elements in $N_G(u)$ is called the {\it degree} of the vertex $u$ in $G$, denoted by $d_G(u)$ or simply $d(u)$. Also, $N_G[u]=N_G(u)\cup \{u\}$. A vertex $u$ is said to be a {\it pendant vertex} if $d(u)=1$ and an edge is said to be a {\it pendant edge} if it is incident with a pendant vertex. In a graph $G$ the {\it maximum} and {\it minimum} degrees are denoted by $\triangle(G)$ and $\delta(G)$, respectively. For a subset $S$ of $V(G)$, $G-S$ is a subgraph obtained from $G$ by deleting the vertices of $S$ and the edges incident with them. Similarly, if we have a subset $T$ of $E(G)$, then $G-T$ is the graph obtained by deleting the edges of $T$. For two non-adjacent vertices $u$ and $v$ in a graph $G$, $G+uv$ is the graph obtained from $G$ by adding an edge between $u$ and $v$ and $G-uv$
is the graph deduced by deleting the edge $uv$. \\
The minimum number of colors required to color a graph $G$ in such a way that no two adjacent vertices have the same color is called the {\it chromatic number} of $G$, it is denoted by $\chi(G)$. A {\it clique} of a graph $G$ is a subset $V'$ of $V(G)$ such that in $G[V']$, the subgraph of $G$ induced by $V'$, is a complete graph. The maximum number of vertices in a clique is called the {\it clique number} of $G$, it is denoted by $\omega(G)$. Let $G_1$ and $G_2$ be two vertex disjoint graphs. $G_1\cup G_2$ is the graph which consists of two components $G_1$ and $G_2$. The {\it join} of $G_1$ and $G_2$, $G_1+G_2$, is the graph whose vertex set is $V(G_1)\cup V(G_2)$ and the edge set is $E(G_1)\cup E(G_2)\cup \{uv:u\in V(G_1), v\in V(G_2)\}$. For a graph $G$, a subset of $V(G)$ is called an {\it independent set}
of $G$ if the subgraph it induces has no edges. Two edges in $G$ are said to be {\it independent edges} if they are non-adjacent.\\
A connected graph is called $c$-connected, for $c\ge 1$, if either $G$ is a complete graph $K_{c+1}$ or else it has at least $c+2$ vertices and has no $(c-1)$-vertex cut. On same lines, a graph is $c$-edge-connected if it has at least two vertices and does not contain any $(c-1)$-edge cut. The maximum value of $c$ such that a connected graph $G$ is $c$-connected is the {\it connectivity} of $G$ and denoted by $\kappa(G)$. The edge-connectivity, $\kappa'(G)$ is defined analogously. Note that for a graph $G$ of order $n$ we have $\kappa(G)\leq \kappa'(G)\leq \delta(G)\leq n-1$ and $\kappa(G)=n-1, \kappa'(G)=n-1$ and $G=K_n$ are equivalent.

Throughout this paper, $P_n, S_n, C_n$ and $K_n$ represent the path, star, cycle and complete graphs with $n$ vertices.  

For a graph $G$ without isolated vertices, Kier et. al. \cite{kh} proposed the zeroth-order Randi\'c index as 
\[^0R_{-\frac{1}{2}}(G)=\sum_{u\in V(G)}d(u)^{-1/2}\]

In 2005, Li et. al. \cite{lz1} introduced the zeroth-order general Randi\'c index by replacing the fraction $-\frac{1}{2}$ by any non-zero real number $\gamma$:
\[^0R_{\gamma}(G)=\sum_{u\in V(G)}d(u)^{\gamma}\]
In \cite{pll}, authors investigated some sharp bounds on $^0R_{\gamma}$ for unicyclic graphs with $n$ vertices and diameter $d$. Volkmann \cite{v} presented sufficient conditions for digraphs to be maximally edge connected in terms of the zeroth-order general Randi\' c index. In \cite{y} Yamaguchi obtained the trees with first three largest zeroth-order general Randi\' c indices among all the trees with given order, diameter or radius. Jamil et. al. \cite{jt} investigated the extremal graphs of $k$-generalized quasi trees for zeroth-order general Randi\'c index.
For a graph $G$ without isolated vertices, the inverse degree $ID(G)$ of $G$ is defined as 
\[ID(G)=\sum_{u\in V(G)} d(u)^{-1}\]
The inverse degree of a graph was first appeared in \cite{f}. After that a lot of work have been done on inverse degree, for details we refer \cite{hlsx,ls,m,zzl}. Xu et. al. \cite{xd} determined some upper and lower bounds on the inverse degree $ID(G)$ for a connected graph $G$ in terms of other graph parameters, such as chromatic number, clique number, connectivity or number of cut edges. They also characterized the extremal graphs. In this paper, we extended their work and investigated if the corresponding results hold for zeroth-order general Randi\'c index, their results can be viewed as corollaries of the main theorems.

\section{Preliminary Results}
 First we present some lemmas that will be useful in proving main results.
From the definition of zeroth-order general Randi\'c index for $\gamma<0$ we have the following lemma.
\begin{lemma}\label{el}
	Let G be a graph such that $vw\in E(G)$ and $y,z \in V(G)$ are nonadjacent. Then for $\gamma<0$, we have
	\begin{enumerate}
		\item $^0R_{\gamma}(G-vw)>{^0R_{\gamma}}(G)$ if $d(v),d(w)\ge2$
		\item $^0R_{\gamma}(G+xy)<{^0R_{\gamma}}(G)$ where $x$ and $y$ are non-isolated vertices in $G$.
	\end{enumerate}
\end{lemma}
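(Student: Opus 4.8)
The plan is to exploit the elementary fact that modifying a single edge alters the degrees of only its two endpoints, so that the difference of the two index values collapses to a sum of exactly two terms, each of which I can control through the strict monotonicity of the map $t \mapsto t^{\gamma}$.

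For the first assertion I would start from the observation that deleting the edge $vw$ leaves the degree of every vertex other than $v$ and $w$ unchanged, while $d(v)$ and $d(w)$ each decrease by one. Substituting into the definition of the index gives
\[ {}^0R_{\gamma}(G-vw) - {}^0R_{\gamma}(G) = \bigl[(d(v)-1)^{\gamma} - d(v)^{\gamma}\bigr] + \bigl[(d(w)-1)^{\gamma} - d(w)^{\gamma}\bigr]. \]
Because $\gamma < 0$, the function $t \mapsto t^{\gamma}$ is strictly decreasing on $(0,\infty)$. The hypothesis $d(v), d(w) \ge 2$ guarantees that $d(v)-1$ and $d(w)-1$ are both at least $1$, hence strictly positive, so each bracketed term is strictly positive and their sum yields the claimed strict inequality.

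The second assertion is the exact mirror image. Adding the edge $xy$ raises $d(x)$ and $d(y)$ each by one and fixes all remaining degrees, so that
\[ {}^0R_{\gamma}(G+xy) - {}^0R_{\gamma}(G) = \bigl[(d(x)+1)^{\gamma} - d(x)^{\gamma}\bigr] + \bigl[(d(y)+1)^{\gamma} - d(y)^{\gamma}\bigr]. \]
Here the requirement that $x$ and $y$ be non-isolated secures $d(x), d(y) \ge 1$, and the same strict monotonicity now forces each bracket to be strictly negative, giving the stated strict decrease. I expect no genuine obstacle in either part; the only point demanding attention is checking that the degree hypotheses keep every base strictly positive, which is precisely why the conditions $d(v),d(w)\ge 2$ and non-isolatedness are imposed, and this is where I would be careful to invoke the monotonicity of $t^{\gamma}$ on $(0,\infty)$ rather than attempting to extend it across $0$.
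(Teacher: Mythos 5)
Your proof is correct and is exactly the argument the paper has in mind: the paper states this lemma without proof, asserting it follows directly from the definition, and your computation of the two-term degree difference together with the strict monotonicity of $t\mapsto t^{\gamma}$ on $(0,\infty)$ for $\gamma<0$ is precisely that omitted elementary verification. Your attention to why the hypotheses $d(v),d(w)\ge 2$ and non-isolatedness keep all bases positive is the right point to emphasize.
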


\begin{lemma}\label{tl}
	For $n>2$, let G be a graph of order $n$ with $v,w\in V(G)$ such that $d(v)\ge d(w)$ and $N_G(w) \backslash N_{G}[v]=\{w_1,w_2,\ldots,w_t\}$ where $t>0$. From G we obtain a new graph $G^*=(G-\{ww_1,ww_2,\ldots,ww_t\})+\{vw_1,vw_2,\ldots,vw_t\}$. If $d(w)>t$, for $\gamma<0$ we have $^0R_{\gamma}(G^*)>^0R_{\gamma}(G)$.
\end{lemma}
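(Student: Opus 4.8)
The plan is to track exactly which vertex degrees differ between $G$ and $G^*$ and to show that all the change is concentrated in the two terms coming from $v$ and $w$. First I would observe that every $w_i$ lies in $N_G(w)\setminus N_G[v]$, so in $G$ the edge $ww_i$ is present while $vw_i$ is absent and $w_i\neq v$; hence in $G^*$ the vertex $w_i$ loses its neighbour $w$ but simultaneously gains the neighbour $v$, so $d_{G^*}(w_i)=d_G(w_i)$. No vertex outside $\{v,w,w_1,\dots,w_t\}$ is touched by the edge switches, and $v,w$ are necessarily distinct (otherwise $N_G(w)\setminus N_G[v]=\emptyset$, contradicting $t>0$). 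Consequently $d_{G^*}(v)=d_G(v)+t$ and $d_{G^*}(w)=d_G(w)-t$, while every remaining degree is unchanged.

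This reduces the claim to the single two-term inequality
\[(d(v)+t)^{\gamma}+(d(w)-t)^{\gamma} > d(v)^{\gamma}+d(w)^{\gamma}.\]
Here the hypothesis $d(w)>t$ is exactly what guarantees $d(w)-t\ge 1>0$, so $w$ does not become isolated and all four powers are finite and well defined for $\gamma<0$; this is the point at which that hypothesis is used.

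To prove the displayed inequality I would introduce the one-step increment $\psi(x)=(x+t)^{\gamma}-x^{\gamma}$ on $(0,\infty)$ and rewrite it as $\psi(d(v))>\psi(d(w)-t)$, using that $\psi(d(w)-t)=d(w)^{\gamma}-(d(w)-t)^{\gamma}$. A short computation gives $\psi'(x)=\gamma\left[(x+t)^{\gamma-1}-x^{\gamma-1}\right]$; since $\gamma<0$ forces $\gamma-1<0$, the map $s\mapsto s^{\gamma-1}$ is decreasing, so the bracket is negative, and multiplying by the negative factor $\gamma$ shows $\psi'(x)>0$. Thus $\psi$ is strictly increasing, and the hypotheses $d(v)\ge d(w)>d(w)-t$ give $d(v)>d(w)-t$ and hence $\psi(d(v))>\psi(d(w)-t)$, as required. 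Equivalently, one may observe that $\{d(v)+t,\,d(w)-t\}$ majorizes $\{d(v),\,d(w)\}$ and invoke the strict convexity of $x\mapsto x^{\gamma}$ through Karamata's inequality.

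The main obstacle is not the analytic estimate, which is a routine monotonicity argument, but the careful degree bookkeeping in the first step: one must verify that each $w_i$ genuinely trades $w$ for $v$ with no collision (this is precisely why the $w_i$ are taken from $N_G(w)\setminus N_G[v]$ rather than $N_G(w)\setminus N_G(v)$), so that the entire difference $^0R_{\gamma}(G^*)-{}^0R_{\gamma}(G)$ collapses to the two terms treated above.
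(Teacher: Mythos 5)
Your proof is correct and follows essentially the same route as the paper: after the same degree bookkeeping (only $v$ and $w$ change degree, by $+t$ and $-t$ respectively), you reduce to a two-term inequality and settle it by the monotonicity of the increment function, your $\psi(x)=(x+t)^{\gamma}-x^{\gamma}$ being just the shift $f(x+t)$ of the paper's $f(x)=x^{\gamma}-(x-t)^{\gamma}$. Your explicit justification of the bookkeeping and the Karamata alternative are nice additions, but the core argument is the same.
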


\begin{proof}
	From the definition of zeroth-order general Randi\'c index, we have
	$^0R_{\gamma}(G^*)-{^0R}_{\gamma}(G)=(d(v)+t)^{\gamma}-d(v)^{\gamma}+(d(w)-t)^{\gamma}-d(w)^{\gamma}$. We deduce that $f(x)=x^{\gamma}-(x-t)^{\gamma}$ is a strictly increasing function for $x>t$ and $\gamma <0$. Since $d(v)\ge d(w)$ we have $d(v)+t> d(w)$ and this implies that $^0R_{\gamma}(G^*)-{^0R}_{\gamma}(G)=f(d(v)+t)-f(d(w))>0$.  \hspace*{4.5cm}                                                      $\square$
\end{proof}

\begin{lemma}\label{phi}
	For $n\ge x\ge 2$ and $\gamma<0$, the function
\begin{equation}
 \psi(x)=(n-x)x^{\gamma}-(n-x+1)(x-1)^{\gamma}
\end{equation}
 is a strictly increasing function.
\end{lemma}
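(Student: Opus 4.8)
The plan is to regard $\psi$ as a differentiable function of a real variable $x$ on the interval $[2,n]$ and to prove that its derivative is strictly positive there; strict monotonicity then follows immediately. Differentiating term by term gives
\[
\psi'(x) = -x^{\gamma} + (n-x)\gamma x^{\gamma-1} + (x-1)^{\gamma} - (n-x+1)\gamma(x-1)^{\gamma-1}.
\]
The central idea is to regroup this into two pieces that can each be shown nonnegative, with at least one strictly positive:
\[
\psi'(x) = \bigl[(x-1)^{\gamma} - x^{\gamma}\bigr] + \gamma\bigl[(n-x)x^{\gamma-1} - (n-x+1)(x-1)^{\gamma-1}\bigr].
\]

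First I would dispatch the bracket $(x-1)^{\gamma}-x^{\gamma}$. Since $\gamma<0$, the map $t\mapsto t^{\gamma}$ is strictly decreasing on $(0,\infty)$, and $0<x-1<x$, so this bracket is strictly positive. The second group is where the sign bookkeeping matters. Because $\gamma<0$, the leading factor $\gamma$ reverses signs, so I must show the inner bracket $(n-x)x^{\gamma-1} - (n-x+1)(x-1)^{\gamma-1}$ is \emph{negative}. For this I would compare the two products factor by factor: on the domain $2\le x\le n$ all of $n-x$, $n-x+1$, $x^{\gamma-1}$, $(x-1)^{\gamma-1}$ are nonnegative, we have $n-x< n-x+1$, and since $\gamma-1<0$ the map $t\mapsto t^{\gamma-1}$ is strictly decreasing, whence $x^{\gamma-1}<(x-1)^{\gamma-1}$. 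Combining these, $(n-x)x^{\gamma-1}\le(n-x)(x-1)^{\gamma-1}<(n-x+1)(x-1)^{\gamma-1}$, so the inner bracket is strictly negative and $\gamma$ times it is strictly positive. Hence $\psi'(x)>0$.

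The main obstacle is precisely this sign tracking in the second group: one must be careful that multiplication by the negative constant $\gamma$ flips the inequality, and must glance at the boundary case $x=n$, where $n-x=0$ annihilates the first product but the estimate still yields a strictly negative inner bracket, the group reducing to $-\gamma(n-1)^{\gamma-1}>0$. An alternative, slightly more conceptual route I could take is to observe that $\psi(x)=g(x)-g(x-1)$ with $g(t)=(n-t)t^{\gamma}$, so that $\psi'(x)=g'(x)-g'(x-1)$ and it suffices to prove $g$ strictly convex on $[1,n]$. A short computation gives $g''(t)=\gamma t^{\gamma-2}\bigl[n(\gamma-1)-(\gamma+1)t\bigr]$, and verifying that the bracket remains negative for $t\in[1,n]$ (splitting according to the sign of $\gamma+1$) forces $g''>0$. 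Either way, the conclusion is that $\psi$ is strictly increasing on $[2,n]$.
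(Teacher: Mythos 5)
Your main argument is correct and follows essentially the same route as the paper: compute $\psi'(x)$, split off the strictly positive term $(x-1)^{\gamma}-x^{\gamma}$, and control the $\gamma$-multiplied terms via the monotonicity of $t\mapsto t^{\gamma-1}$ (the paper replaces $(x-1)^{\gamma-1}$ by the smaller $x^{\gamma-1}$ and collapses the bracket to $-\gamma x^{\gamma-1}>0$, while you replace $x^{\gamma-1}$ by the larger $(x-1)^{\gamma-1}$ to show the bracket is negative before multiplying by $\gamma$ --- a cosmetic difference). The convexity alternative you sketch at the end, writing $\psi(x)=g(x)-g(x-1)$ with $g(t)=(n-t)t^{\gamma}$ strictly convex, is a genuinely different and more conceptual route, but your primary proof is in substance the paper's proof.
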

\begin{proof}
	For given $\psi(x)$, we obtain $\psi'(x)=\gamma(n-x)x^{\gamma-1}-x^{\gamma}-\gamma(n-x+1)(x-1)^{\gamma-1}+(x-1)^{\gamma}>\gamma((n-x)x^{\gamma-1}-(n-x+1)x^{\gamma-1})-x^{\gamma}+(x-1)^{\gamma}=-\gamma x^{\gamma-1}-x^{\gamma}+(x-1)^{\gamma}>0$. Hence, for given $n\ge x\ge2$ the function $\psi(x)$ is strictly increasing.  \hspace*{7cm}                                                      $\square$
	
\end{proof}

\begin{lemma}\label{fin}
	Let $n,c$ be integer numbers, $n\geq 3$ and $1\leq c\leq n-2$. For $1\leq x\leq n-c-1$ and $-1\leq \gamma<0$, the function $f(x)= x(x+c-1)^{\gamma}+(n-c-x)(n-1-x)^{\gamma}$ is minimum for $x=1$ and $x=n-c-1$. For $c=1$ and $\gamma =-1$ we get $f(x)=2$.
\end{lemma}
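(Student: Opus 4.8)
The plan is to exploit a hidden symmetry of $f$ and then reduce the whole statement to the concavity of a single-variable function. Setting $g(t)=t(t+c-1)^{\gamma}$, one checks immediately that the substitution $t\mapsto n-c-x$ carries the first summand to the second, so that $f(x)=g(x)+g(n-c-x)$. In particular $f$ is symmetric about the midpoint $x=(n-c)/2$ of the interval $[1,n-c-1]$, and $f(1)=f(n-c-1)=c^{\gamma}+(n-c-1)(n-2)^{\gamma}$. Hence it suffices to prove that $f$ attains its minimum over $[1,n-c-1]$ at an endpoint; the symmetry then forces both endpoints to be minimizers simultaneously.

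First I would record that $f$ is well defined and twice differentiable on the whole interval: since $c\ge 1$ the base $x+c-1\ge c\ge 1>0$, and since $x\le n-c-1$ we have $n-1-x\ge c\ge 1>0$, so no base ever reaches $0$, where $\gamma<0$ would cause trouble. Next I would differentiate $g$ twice. A direct computation gives $g'(t)=(t+c-1)^{\gamma-1}\big[(1+\gamma)t+(c-1)\big]$ and then
\[
g''(t)=\gamma\,(t+c-1)^{\gamma-2}\big[(1+\gamma)t+2(c-1)\big].
\]
Since $f''(x)=g''(x)+g''(n-c-x)$, the concavity of $f$ will follow as soon as $g''\le 0$ on the relevant range.

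The crux is the sign of $g''$, and this is precisely where the two hypotheses enter. The factor $(t+c-1)^{\gamma-2}$ is positive and $\gamma<0$, so $g''(t)\le 0$ is equivalent to $(1+\gamma)t+2(c-1)\ge 0$. Here the constraint $\gamma\ge -1$ gives $1+\gamma\ge 0$, the constraint $c\ge 1$ gives $c-1\ge 0$, and $t\ge 1>0$; hence the bracket is a sum of nonnegative terms and is nonnegative. Thus $g$ is concave on the interval, $f$ is concave as a sum of two concave functions (the second composed with the affine map $x\mapsto n-c-x$, which preserves concavity), and a concave function on a closed interval attains its minimum at an endpoint. Combined with $f(1)=f(n-c-1)$ this yields the claim.

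Finally I would dispose of the degenerate case $c=1,\gamma=-1$ by direct evaluation: there $g(t)=t\cdot t^{-1}=1$, so $f(x)=g(x)+g(n-1-x)=2$ identically, which is also consistent with $g''\equiv 0$ since the bracket $(1+\gamma)t+2(c-1)$ then vanishes. I expect the only genuine obstacle to be this sign analysis of the bracket: it is exactly the assumption $-1\le\gamma$ that keeps $1+\gamma\ge 0$ and hence $g$ concave. For $\gamma<-1$ the bracket can become negative for large $t$, $g$ would no longer be concave, and the endpoint-minimum conclusion could break down; everything else reduces to routine differentiation and the standard endpoint property of concave functions.
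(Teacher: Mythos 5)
Your proof is correct, and it takes a genuinely different route from the paper's. Both arguments start from the same symmetry $f(x)=f(n-c-x)$, but the paper works with the \emph{first} derivative: it shows $f'(x)<0$ for $x>(n-c)/2$ by bounding $\bigl(\frac{x+c-1}{n-1-x}\bigr)^{\gamma-1}$ above by $\bigl(\frac{x+c-1}{n-1-x}\bigr)^{-1}$ and then reducing a cross-multiplied inequality to $\gamma(c-1)(2x-n+c)\le 0$; symmetry then forces $f$ to increase on the left half and decrease on the right half, so the minimum sits at the two endpoints. You instead prove concavity via the \emph{second} derivative of the single summand $g(t)=t(t+c-1)^{\gamma}$, using the clean factorization $g''(t)=\gamma(t+c-1)^{\gamma-2}\bigl[(1+\gamma)t+2(c-1)\bigr]$, and then invoke the endpoint-minimum property of concave functions. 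Your route has two advantages: it makes completely explicit where each hypothesis enters (the bracket is nonnegative precisely because $1+\gamma\ge 0$ and $c-1\ge 0$), and it handles the degenerate case $c=1,\ \gamma=-1$ uniformly as $g''\equiv 0$, whereas the paper's cross-multiplication step silently requires $x(\gamma+1)+c-1>0$, which fails exactly in that case. The paper's first-derivative analysis, on the other hand, delivers strict monotonicity on each half of the interval for the non-degenerate parameters, which is what certifies that the endpoints are the \emph{only} minimizers — information used downstream in the equality characterizations of Theorems 5 and 6; your argument yields the same conclusion (strict concavity whenever the bracket is strictly positive), but you would need to state that strictness explicitly to serve that purpose.
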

\begin{proof}
	We have $f(x)=f(n-c-x)$, which implies that $x=(n-c)/2$ is a symmetry axis for the graph of this function. Its derivative equals $f'(x)=(x+c-1)^{\gamma -1}(c-1+x(\gamma +1))-(n-1-x)^{\gamma -1}(n(\gamma +1)-1-\gamma c -x(\gamma +1))$.
By the symmetry of $f$ we can only consider the case when $ x \geq (n - c)/2$. We have $f'((n-c)/2)=0$ and we shall prove that $f'(x)<0$ for $ x>(n-c)/2$. This condition is equivalent to
\begin{equation}
\left(\frac{x+c-1}{n-1-x}\right)^{\gamma -1}<\frac{(\gamma +1)(n-x)-\gamma c-1}{x(\gamma +1)+c-1}.
\end{equation}
Since $x>(n-c)/2$ and $\gamma <0$ it follows that $\left(\frac{x+c-1}{n-1-x}\right)^{\gamma -1}<\left(\frac{x+c-1}{n-1-x}\right)^{ -1}=\frac{n-1-x}{x+c-1}.$	
But $	\frac{n-1-x}{x+c-1}\leq \frac{(\gamma +1)(n-x)-\gamma c-1}{x(\gamma +1)+c-1}$ since this is equivalent to $\gamma (c-1)(2x-n+c)\leq 0$ and (2) is proved.  \hspace*{4.5cm}                                                      $\square$
\end{proof}

\begin{lemma}\label{cl}\cite{bm}
	Every c-chromatic graph has at least c vertices of degree at least $c-1$.
\end{lemma}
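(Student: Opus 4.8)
The plan is to reduce the statement to a well-known structural fact about colour-critical graphs. Given a graph $G$ with $\chi(G)=c$, first I would pass to a subgraph $H\subseteq G$ that is $c$-\emph{critical}: among all subgraphs of $G$ having chromatic number exactly $c$, choose one that is minimal under deletion of edges (equivalently, delete edges one at a time as long as the chromatic number stays $c$, stopping at a minimal such subgraph). By construction $\chi(H)=c$, while $\chi(H-e)<c$, i.e. $\chi(H-e)\le c-1$, for every edge $e$ of $H$.

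The key step is to show that $\delta(H)\ge c-1$, and this is where the real work lies. Suppose some vertex $v$ had $d_H(v)\le c-2$. Pick any edge $e=vw$ incident with $v$; by criticality $H-e$ admits a proper colouring with the palette $\{1,\dots,c-1\}$. I would then argue that this colouring can be repaired into a proper $(c-1)$-colouring of all of $H$: since $v$ has at most $c-2$ neighbours, its neighbourhood uses at most $c-2$ of the $c-1$ available colours, so at least one colour is free at $v$; recolouring $v$ with a free colour removes the only possible conflict (the edge $vw$) and yields a proper $(c-1)$-colouring of $H$, contradicting $\chi(H)=c$. Hence every vertex of $H$ has degree at least $c-1$. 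I expect this repairing argument to be the main obstacle, mostly in stating it cleanly rather than in any genuine difficulty.

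To finish, I would observe that a $c$-chromatic graph has at least $c$ vertices: if $H$ had fewer than $c$ vertices it could trivially be coloured with fewer than $c$ colours, contradicting $\chi(H)=c$. Thus $H$ contains at least $c$ vertices, each of degree at least $c-1$ in $H$.

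Finally I would lift this back to $G$. Since $H$ is a subgraph of $G$, for every vertex $u$ we have $d_G(u)\ge d_H(u)$, so the at-least-$c$ vertices of $H$ of degree $\ge c-1$ in $H$ are also vertices of $G$ of degree $\ge c-1$ in $G$. This gives the desired $c$ vertices of degree at least $c-1$ in $G$ and completes the proof. The whole argument hinges on the minimum-degree property of critical graphs; once that is in hand, the vertex-count bound and the subgraph degree comparison are routine.
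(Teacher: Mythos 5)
The paper does not actually prove this lemma --- it is quoted from Bondy and Murty \cite{bm} --- so your proposal stands on its own. Your strategy (pass to a critical subgraph $H$, prove minimum degree at least $c-1$ there via the recolouring argument, then lift degrees back to $G$) is the standard textbook route, and the recolouring step itself is carried out correctly.

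There is, however, one step that fails as written. You take $H$ minimal under deletion of \emph{edges} only (your parenthetical construction keeps every vertex of $G$), and then assert $\delta(H)\ge c-1$. That assertion is false in general, because edge-minimal subgraphs can contain isolated vertices, and at an isolated vertex your argument breaks down: ``pick any edge $e=vw$ incident with $v$'' is impossible when $d_H(v)=0$, yet $0\le c-2$ for every $c\ge 2$, so this case cannot be excluded. Concretely, let $G$ be $K_c$ with one pendant vertex attached ($c\ge 2$); $G$ is connected and $c$-chromatic, and edge-minimalization deletes the pendant edge, leaving $H=K_c\cup K_1$, which has a vertex of degree $0$. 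What your recolouring argument actually establishes is that every \emph{non-isolated} vertex of $H$ has degree at least $c-1$. The repair is short and standard: take $H$ minimal under deletion of vertices \emph{and} edges (for $c\ge 2$ such an $H$ has no isolated vertices, since deleting an isolated vertex would leave the chromatic number equal to $c$, contradicting minimality), or equivalently discard the isolated vertices of your $H$, noting that this does not change the chromatic number; the subgraph that remains still has at least $c$ vertices, each of degree at least $c-1$, and your lifting step to $G$ then finishes the proof.
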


\section{Main Results and Discussion}
In this section, we will present our main results.
\subsection{Extremal graphs w.r.t. zeroth-order general Randi\'c index in terms of chromatic number and clique number}
Let $\complement(n,c)$ denote the set of all connected graphs having order $n$ and chromatic number $c$ and $\mho(n,c)$ the set of all connected graphs with order $n$ and clique number $c$. Hereafter, we always assume that $n_1\ge n_2\ge \ldots \ge n_c$ are positive integers with $\sum_{i=1}^c n_i=n$. A complete $c$-partite graph of order $n$ whose partite sets are of size $n_1,n_2,\ldots,n_c$, respectively, is denoted by $K_{n_1,n_2,\ldots,n_c}$. The {\it Tur\'an graph}, $T_n(c)$ is a complete $c$-partite graph of order $n$ whose partite sets differ in size by at most 1. For $c=1$, the set $\complement(n,c)$ contains a single graph $\overline{K_n}$. When $c=n$, the only graph in $\complement(n,c)$ is $K_n$. Also, we have similar results on the set $\mho(n,c)$ for $c=1$ and $c=n$.  Here we will investigate the extremal graphs in $\complement(n,c)$ and $\mho(n,c)$ w.r.t. $^0R_{\gamma}$.

\begin{lemma}\label{cb}
	Suppose that there exist two indices $i,j$ such that $i\ne j$, $1\leq i,j\leq c$ and $n_j-n_i\ge 2$. Then for $\gamma<0$ we have
	\[^0R_{\gamma}(K_{n_1,\ldots,n_i,\ldots ,n_j,\ldots,n_c})>{^0R}_{\gamma}(K_{n_1,\ldots,n_i+1,\ldots,n_j-1,\ldots,n_c})\] 
\end{lemma}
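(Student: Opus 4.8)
The plan is to compute $^0R_\gamma$ of a complete $c$-partite graph explicitly and then collapse the claimed inequality into a single comparison of values of the function $\psi$ from Lemma \ref{phi}. First I would record the degree formula: in $K_{n_1,\dots,n_c}$ a vertex lying in a part of size $n_k$ is adjacent to every vertex outside its part, so its degree equals $n-n_k$. Summing $d(u)^{\gamma}$ over all vertices gives
\[
^0R_{\gamma}(K_{n_1,\dots,n_c})=\sum_{k=1}^{c} n_k\,(n-n_k)^{\gamma}.
\]
The operation in the statement replaces $n_i$ by $n_i+1$ and $n_j$ by $n_j-1$ while leaving every other part untouched, so in the difference of the two indices all terms with $k\ne i,j$ cancel and I am left with
\[
D=\bigl[n_i(n-n_i)^{\gamma}+n_j(n-n_j)^{\gamma}\bigr]-\bigl[(n_i+1)(n-n_i-1)^{\gamma}+(n_j-1)(n-n_j+1)^{\gamma}\bigr].
\]

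The key step, and the part that takes some care, is to recognise $D$ as a difference of two values of $\psi$. Substituting $x=n-n_i$ into $\psi(x)=(n-x)x^{\gamma}-(n-x+1)(x-1)^{\gamma}$ reproduces exactly the two terms coming from part $i$, namely $\psi(n-n_i)=n_i(n-n_i)^{\gamma}-(n_i+1)(n-n_i-1)^{\gamma}$. Substituting instead $x=n-n_j+1$ yields $\psi(n-n_j+1)=(n_j-1)(n-n_j+1)^{\gamma}-n_j(n-n_j)^{\gamma}$, which is precisely the negative of the contribution of part $j$. Hence
\[
D=\psi(n-n_i)-\psi(n-n_j+1),
\]
reducing the whole computation to one monotonicity comparison.

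Finally I would invoke the monotonicity of $\psi$. The hypothesis $n_j-n_i\ge 2$ is equivalent to $n-n_i>n-n_j+1$, and both arguments lie in the admissible range $[2,n]$ of Lemma \ref{phi}: since $n\ge n_i+n_j$ we have $n-n_i\ge n_j\ge 3$ and $n-n_i\le n-1$, while $n-n_j+1\ge n_i+1\ge 2$ and $n-n_j+1\le n$. Because $\psi$ is strictly increasing there, $\psi(n-n_i)>\psi(n-n_j+1)$, so $D>0$ and the stated inequality follows. The only genuine obstacle is spotting the two substitutions $x=n-n_i$ and $x=n-n_j+1$ that turn $D$ into $\psi(n-n_i)-\psi(n-n_j+1)$; once that identity is in hand the result is immediate from Lemma \ref{phi}.
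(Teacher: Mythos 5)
Your proof is correct and takes essentially the same route as the paper: both compute the difference of the two indices, recognise it as $\psi(n-n_i)-\psi(n-n_j+1)$ with $\psi$ from Lemma \ref{phi}, and conclude by strict monotonicity. Your explicit verification that both arguments lie in the range $[2,n]$ where Lemma \ref{phi} applies is a small addition the paper leaves implicit, but the argument is otherwise identical.
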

\begin{proof}
	Suppose that $i<j$. From the definition of zeroth-order general Randi\'c index we have
	\begin{align*}
	^0R_{\gamma}(K_{n_1,\ldots,n_i,\ldots ,n_j,\ldots,n_c})&-{^0R}_{\gamma}(K_{n_1,\ldots,n_i+1,\ldots,n_j-1,\ldots,n_c})=n_i(n-n_i)^{\gamma}+n_j(n-n_j)^{\gamma}\\&-(n_i+1)(n-n_i-1)^{\gamma}-(n_j-1)(n-n_j+1)^{\gamma}\\
	&=\psi(x)-\psi(y+1),
	\end{align*}
	where $x=n-n_i, y=n-n_j$ and $\psi(x)$ is given by (1). Also, we have $n_j-n_i\ge2$ which implies that $x-y\ge 2$ and $x>y+1$. By Lemma \ref{phi} $\psi(x)$ is strictly increasing, which yields
	\[^0R_{\gamma}(K_{n_1,\ldots,n_i,\ldots ,n_j,\ldots,n_c})-{^0R}_{\gamma}(K_{n_1,\ldots,n_i+1,\ldots,n_j-1,\ldots,n_c})>0,\]
	which completes the proof.  \hspace*{6cm}                                                      $\square$
\end{proof}

In this subsection we always assume that $1<c<n$ and $n=cq+r$, where $0\le r <c$, i.e., $q=\left\lfloor{\frac{n}{c}}\right\rfloor$.

\begin{thm}\label{ltheo}
	For any graph $G\in\complement(n,c)$ and $\gamma<0$, we have
	\[^0R_{\gamma}(G)\ge (c-r)\left\lfloor\frac{n}{c}\right\rfloor(n-\left\lfloor\frac{n}{c}\right\rfloor)^{\gamma}+r\left\lceil\frac{n}{c}\right\rceil(n-\left\lceil\frac{n}{c}\right\rceil)^{\gamma}\]
	and lower bound is achieved if and only if $G=T_n(c)$.
\end{thm}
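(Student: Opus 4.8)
The plan is to reduce an arbitrary $G\in\complement(n,c)$ in two stages: first to a complete $c$-partite graph on the color classes, and then to the balanced Tur\'an graph, with $^0R_{\gamma}$ strictly decreasing at every step. Throughout I use that $G$ is connected on $n>c\ge 2$ vertices, so $G$ has no isolated vertices, and that $^0R_{\gamma}$ of a complete multipartite graph depends only on its multiset of part sizes.

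\textbf{Stage 1 (reduction to a complete $c$-partite graph).} I would fix a proper $c$-coloring of $G$ with color classes $V_1,\ldots,V_c$. Since $\chi(G)=c$, all classes are nonempty, so the sizes $|V_1|,\ldots,|V_c|$ form a partition of $n$ into $c$ positive parts, and every edge of $G$ joins two distinct classes. Let $H=K_{|V_1|,\ldots,|V_c|}$ be the complete $c$-partite graph on these classes. Then $G$ is a spanning subgraph of $H$, and $H$ is obtained from $G$ by successively inserting the missing cross-class edges; each insertion joins two non-isolated vertices, so by part~2 of Lemma~\ref{el} it strictly decreases $^0R_{\gamma}$. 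Hence $^0R_{\gamma}(G)\ge {}^0R_{\gamma}(H)$, with equality if and only if no edge was added, i.e. $G=H$. Moreover $\chi(H)=c$, so $H\in\complement(n,c)$ and the reduction stays inside the class.

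\textbf{Stage 2 (balancing the parts).} Writing the part sizes of $H$ as $n_1,\ldots,n_c$, as long as some two of them differ by at least $2$, Lemma~\ref{cb} lets me replace a pair $(n_i,n_j)$ with $n_j-n_i\ge 2$ by $(n_i+1,\,n_j-1)$ (both still positive) and strictly decrease $^0R_{\gamma}$. Each such move changes $\sum_k n_k^2$ by $2(n_i-n_j+1)<0$, so this nonnegative integer quantity strictly decreases and the process must terminate. It can terminate only at a partition in which every two parts differ by at most $1$; since such a partition uses only the values $q=\lfloor n/c\rfloor$ and $q+1=\lceil n/c\rceil$, with exactly $r=n-cq$ parts equal to $q+1$, it is unique up to reordering and realizes $T_n(c)$. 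Therefore $^0R_{\gamma}(H)\ge {}^0R_{\gamma}(T_n(c))$, with equality if and only if $H=T_n(c)$.

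\textbf{Stage 3 (evaluation and equality).} A vertex in a part of size $n_i$ of $T_n(c)$ is adjacent to exactly the $n-n_i$ vertices outside its part, so its degree is $n-n_i$. Grouping the vertices by the two part-sizes gives
\[
{}^0R_{\gamma}(T_n(c))=(c-r)\left\lfloor\tfrac{n}{c}\right\rfloor\left(n-\left\lfloor\tfrac{n}{c}\right\rfloor\right)^{\gamma}+r\left\lceil\tfrac{n}{c}\right\rceil\left(n-\left\lceil\tfrac{n}{c}\right\rceil\right)^{\gamma},
\]
which is precisely the claimed bound. Chaining $^0R_{\gamma}(G)\ge{}^0R_{\gamma}(H)\ge{}^0R_{\gamma}(T_n(c))$ and combining the two equality conditions shows equality holds throughout if and only if $G=H=T_n(c)$, i.e. $G=T_n(c)$. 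The step I expect to require the most care is the termination-and-uniqueness argument in Stage~2: one must check that $T_n(c)$ is the \emph{unique} terminal configuration rather than merely some local minimum, which is exactly the point secured by observing that any two parts differing by $\ge 2$ still admit a balancing move via Lemma~\ref{cb}, while the all-parts-within-one partition is forced to be the Tur\'an partition.
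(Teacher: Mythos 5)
Your proof is correct and follows essentially the same route as the paper's: reduce $G$ to the complete $c$-partite graph on its color classes via Lemma~\ref{el}(2), then balance the part sizes via Lemma~\ref{cb} to reach $T_n(c)$. The only difference is one of detail rather than substance — you make explicit the termination argument (via the decreasing potential $\sum_k n_k^2$) and the chaining of equality conditions, which the paper compresses into its extremal-graph formulation.
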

\begin{proof}
	Let $G\in\complement(n,c)$ such that $G$ has the minimal zeroth-order general Randi\'c index for $\gamma<0$. From the definition of the chromatic number, $G$ has $c$ color classes and every color class is an independent set. Suppose that each  color class contains $n_i$ vertices, where $1\le i\le c$. By Lemma \ref{el} one deduces that $G$ must be a complete $c$-partite graph $K_{n_1,\ldots,n_c}$ and Lemma \ref{cb} guarantees that $G= T_n(c)$.
	 We have 
	\[^0R_{\gamma}(T_n(c))=(c-r)\left\lfloor\frac{n}{c}\right\rfloor(n-\left\lfloor\frac{n}{c}\right\rfloor)^{\gamma}+r\left\lceil\frac{n}{c}\right\rceil(n-\left\lceil\frac{n}{c}\right\rceil)^{\gamma},\]
	which completes the proof.  \hspace*{5.5cm}                                                      $\square$
\end{proof}
Further, we shall use some notation introduced in \cite{xd}.
A graph obtained by attaching $n-c$ pendant vertices to one vertex of $K_c$ is called a {\it pineapple} graph and is denoted by $PA_n(c)$. $S_n(m_1,m_2,\ldots,m_c)$ will denote a connected graph of order $n$ obtained by attaching $n-c$ pendant vertices to a complete graph $K_c$, such that $m_i$ pendant vertices are attached to the $i$th  vertex of $K_c$ for $1\le i\le c$. It follows that $\sum_{i=1}^{c}m_i=n-c$. We consider that the vertices in the clique are labeled $v_1,v_2,\ldots,v_c$. From the definition of $S_n(m_1,m_2,\ldots,m_c)$ we have $S_n(0,0,\ldots,0)= K_c$ and $S_n(n-c,0,\ldots,0)= PA_n(c)$.
In the following theorem we give an upper bound on $^0R_{\gamma}(G)$ in terms of order $n$ and chromatic number $c$ of $G$.

\begin{thm}\label{cm}
	Let $\gamma\leq -1$ then for any graph $G\in \complement(n,c)$, we have
	\[^0R_{\gamma}(G)\le n-c+(n-1)^{\gamma}+(c-1)(c-1)^{\gamma}\]
	and the equality holds if and only if $G=PA_n(c)$.
\end{thm}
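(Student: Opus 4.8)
The plan is to show that $PA_n(c)$ is the unique maximizer of ${}^0R_\gamma$ over $\complement(n,c)$. First I would record the degree sequence of the pineapple: the clique vertex carrying all pendants has degree $(c-1)+(n-c)=n-1$, the other $c-1$ clique vertices have degree $c-1$, and each of the $n-c$ attached vertices is a pendant. Hence
\[{}^0R_\gamma(PA_n(c))=(n-c)\cdot 1^\gamma+(n-1)^\gamma+(c-1)(c-1)^\gamma,\]
which is exactly the claimed bound. It therefore suffices to take a graph $G\in\complement(n,c)$ of maximum ${}^0R_\gamma$ and prove it equals $PA_n(c)$. The case $c=2$ is the classical fact that among connected graphs the star $K_{1,n-1}=PA_n(2)$ uniquely maximizes ${}^0R_\gamma$ for $\gamma<0$ (delete cycle edges by Lemma \ref{el} to reach a tree, then concentrate by Lemma \ref{tl} to reach the star, which is bipartite and so lies in $\complement(n,2)$), so I focus on $c\ge 3$.

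Next I would reduce an extremal $G$ to a normal form ``clique plus pendants''. Since $\gamma<0$, Lemma \ref{el}(1) shows that deleting any edge whose endpoints both have degree at least $2$ strictly increases ${}^0R_\gamma$; as $G$ is extremal, every such edge must be essential, i.e.\ its removal disconnects $G$ or lowers the chromatic number below $c$. By Lemma \ref{cl} at least $c$ vertices have degree at least $c-1\ge 2$, so $G$ has at most $n-c$ pendant vertices. The aim is to force exactly $n-c$ pendants, hence exactly $c$ non-pendant (core) vertices; once this is granted, every non-core vertex must attach directly to the core (a chain would create a degree-$2$ internal, hence non-core, vertex), so $G=S_n(m_1,\dots,m_c)$. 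Finally, since pendants never raise the chromatic number, $\chi(G)=\chi(K_c\text{-core})=c$ together with the core having exactly $c$ vertices forces the core to be $K_c$.

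Having reached $G=S_n(m_1,\dots,m_c)$, where clique vertex $v_i$ has degree $c-1+m_i$ and $\sum_i m_i=n-c$, I would concentrate all pendants onto one clique vertex. Let $v$ be a clique vertex of maximum degree and let $w$ be any clique vertex carrying a pendant. Applying Lemma \ref{tl} with that single pendant as the transferred vertex (here $t=1<d(w)$ and $d(v)\ge d(w)$) moves the pendant onto $v$, keeps the $K_c$ intact so that $\chi=c$ is preserved, and strictly increases ${}^0R_\gamma$. Iterating deposits all pendants on $v$ and yields $PA_n(c)$. The strictness in Lemmas \ref{el} and \ref{tl} gives uniqueness; equivalently, the identity ${}^0R_\gamma(S_n(m_1,\dots,m_c))=(n-c)+\sum_i(c-1+m_i)^\gamma$ together with the strict convexity of $x\mapsto x^\gamma$ places the maximum only at an extreme point of the simplex $\{m_i\ge 0,\ \sum_i m_i=n-c\}$.

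The genuinely delicate step is the normalization in the second paragraph: the edge-deletion and degree-shifting moves must be performed so as to preserve simultaneously connectivity and the exact value $\chi=c$, and one must rule out extremal graphs with fewer than $n-c$ pendants (equivalently, with more than $c$ core vertices). I expect this to be the main obstacle, and it is precisely where the hypothesis $\gamma\le -1$ enters: it forces $d^\gamma\le 2^\gamma\le \tfrac12$ at every vertex of degree at least $2$, so that trading a pendant (contribution $1$) for any additional core vertex is strictly lossy, making the comparison between $PA_n(c)$ and every competing near-extremal configuration come out in favour of the pineapple.
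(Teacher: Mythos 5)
Your skeleton matches the paper's proof: use Lemma \ref{cl} to produce a set $A(G)$ of $c$ vertices of degree at least $c-1$, argue that in an extremal graph every vertex outside $A(G)$ is a pendant, conclude $G=S_n(m_1,\ldots,m_c)$ with core $K_c$, and then concentrate the pendants onto one clique vertex via Lemma \ref{tl} (your convexity remark is a legitimate alternative for that last step, and your $c=2$ reduction via ``the star maximizes ${}^0R_\gamma$ over all connected graphs'' is also fine). However, there is a genuine gap exactly where you yourself flag one: you never prove that an extremal graph has exactly $n-c$ pendants, i.e.\ that no vertex outside $A(G)$ can have degree $\ge 2$. Your closing heuristic (``trading a pendant for a core vertex is strictly lossy because $2^\gamma\le \tfrac12$'') accounts only for the loss side of the trade; it ignores that a competing configuration also \emph{gains} on the high-degree side, since the pineapple must pay for having one vertex of degree $n-1$ (contribution $(n-1)^\gamma$, very small) while a competitor may keep all $c$ vertices of $A(G)$ at degree as low as $c-1$ (contribution $(c-1)^\gamma$ each). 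The inequality that must be verified is $(c-1)^{\gamma}+2^{\gamma}-1-(n-1)^{\gamma}<0$, and it genuinely requires both $c\ge 3$ (so that $(c-1)^{\gamma}\le 2^{\gamma}$) and $\gamma\le -1$ (so that $2^{\gamma+1}\le 1$); your sketch never isolates this quantity.

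The paper closes the step with a one-line estimate you are missing: if some $v_k\notin A(G)$ has $d(v_k)\ge 2$, then bounding ${}^0R_\gamma(G)$ termwise,
\[
{}^0R_{\gamma}(G)\le c(c-1)^{\gamma}+2^{\gamma}+(n-c-1)< n-c+(n-1)^{\gamma}+(c-1)(c-1)^{\gamma},
\]
where the second inequality is equivalent to $(c-1)^{\gamma}+2^{\gamma}-1-(n-1)^{\gamma}<0$, which follows from $(c-1)^{\gamma}+2^{\gamma}\le 2^{\gamma+1}\le 1<1+(n-1)^{\gamma}$. This contradicts the extremality of $G$ and forces every vertex outside $A(G)$ to be a pendant; from that point on your argument is sound. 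Note also that your ``essential edge'' normalization at the start of the second paragraph is never actually used and would not by itself yield the claimed structure; the quantitative estimate above is what does the work.
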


\begin{proof}
	Since $G$ is connected it follows that $c\geq 2$. If $c=n$ then $G=K_n$ and the theorem is verified directly. It remains to consider the case when $2\le c\le n-1$. It follows that $n\geq 3$. Clearly, for $c=2$ $G$ is a connected bipartite graph. Moreover, if $G= S_n$ (note that the star $S_n$ coincides with $S_n(n-2,0)$ and with $PA_n(2))$, then the above equality holds and in this case $^0R_{\gamma}(G)=n-1+(n-1)^{\gamma}$. Otherwise, $G$ has at least two non-pendant  vertices. This implies $^0R_{\gamma}(G)\le 2\cdot 2^{\gamma}+n-2<(n-1)+(n-1)^{\gamma}$, because $2\cdot 2^{\gamma}-1-(n-1)^{\gamma}<2\cdot 2^{\gamma}-1\leq 0$. Hence, $G$ is not maximal if $G\neq S_n$.
\\
	We now prove the theorem for $3\le c\le n-1$. Suppose that $V(G)=\{v_1,v_2,\ldots,v_n\}$. By Lemma \ref{cl}, we can consider a set of vertices $A(G)=  \\ \{v_1,v_2,\ldots,v_c\}$, such that $d(v_i)\ge c-1$ for $1\le i\le c$. Then we have $|V(G)\backslash A(G)|>0$. If there exists $v_k\in V(G)\backslash A(G)$ such that $d(v_k)\ge 2$, then $^0R_{\gamma}(G)\leq c(c-1)^{\gamma}+2^{\gamma}+n-c-1< n-c+(n-1)^{\gamma}+(c-1)(c-1)^{\gamma}$ if and only if $(c-1)^{\gamma}+2^{\gamma}-1-(n-1)^{\gamma}<0$.
The last inequality is valid since $c\geq 3$ implies $(c-1)^{\gamma}\leq 2^{\gamma}$, which yields $(c-1)^{\gamma}+2^{\gamma}-1-(n-1)^{\gamma} \leq   2^{\gamma +1}-1-(n-1)^{\gamma}<0$, which holds because $\gamma \leq -1$.\\
This shows that $d(v_k)=1$ for any $v_k\in V(G)\backslash A(G)$, where $c+1\le k\le n$, for a graph having maximum $^0R_{\gamma}$. Since $G$ has chromatic number $c$ it follows that the subgraph of $G$ induced by $A(G)$ is $K_c$. It follows that $G$ is isomorphic to a complete graph $K_c$ with $n-c$ pendant vertices, that is, $S_n(m_1,m_2,\cdots,m_c)$ such that $\sum_{i=1}^{c}m_i=n-c$. If $S_n(m_1,m_2,\ldots,m_c)=PA_n(c)$ we are done. Otherwise, by applying Lemma \ref{tl} several times and supposing that $m_1\geq m_2\geq \ldots \geq m_c$, we get
	$$^0R_{\gamma}(G)={^0R}_{\gamma}(S_n(m_1,m_2,\ldots,m_c))<{^0R}_{\gamma}(S_n(m_1+m_c,m_2,\ldots,m_{c-1},0))$$ $$<\ldots<{^0R}_{\gamma}(S_n(n-c-m_2,m_2,0,\ldots,0))<{^0R}_{\gamma}(S_n(n-c,0,\ldots,0))$$ $$={^0R}_{\gamma}(PA_n(c)),$$
	which completes the proof.  \hspace*{6.5cm}                                                      $\square$
\end{proof}
The following result gives  upper and lower bounds on zeroth-order general Randi\'c index in terms of order $n$ and clique number $c$.

\begin{thm}
	For any graph $G\in \mho(n,c)$ and $\gamma\leq -1$ we have 
	\[(c-r)\left\lfloor{\frac{n}{c}}\right\rfloor{(n-\left\lfloor{\frac{n}{c}}\right\rfloor)}^{\gamma}+r\left\lceil{\frac{n}{c}}\right\rceil(n-\left\lceil{\frac{n}{c}}\right\rceil)^{\gamma}\le ^0R_{\gamma}(G)\le  n-c+(n-1)^{\gamma}+(c-1)(c-1)^{\gamma}     \]
The lower bound is attained if and only if $G=T_n(c)$ and the upper bound if and only if $G=PA_n(c)$.
\end{thm}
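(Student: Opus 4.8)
The plan is to read off each bound from the matching chromatic-number theorem already established, exploiting $\omega(G)\le\chi(G)$ and the monotonicity of the two extremal quantities in the number of parts, while supplying a separate convexity argument for the lower bound, where the chromatic reduction turns out to be too weak.

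The upper bound is immediate. Put $c'=\chi(G)$; since $\omega(G)=c$ we have $c'\ge c$, and because $G\neq K_n$ we also have $c'\le n-1$, so $G\in\complement(n,c')$ with $2\le c'\le n-1$. A one-line computation shows that for $\gamma\le -1$ the function ${}^0R_{\gamma}(PA_n(k))=n-k+(n-1)^{\gamma}+(k-1)^{\gamma+1}$ is strictly decreasing in $k$: its increment is $-1+[\,k^{\gamma+1}-(k-1)^{\gamma+1}\,]\le -1<0$, since $t\mapsto t^{\gamma+1}$ is non-increasing when $\gamma+1\le 0$. Hence Theorem \ref{cm} gives ${}^0R_{\gamma}(G)\le {}^0R_{\gamma}(PA_n(c'))\le {}^0R_{\gamma}(PA_n(c))$. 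Equality throughout forces $c'=c$ by strict monotonicity and then $G=PA_n(c)$ by the equality clause of Theorem \ref{cm}; conversely $PA_n(c)\in\mho(n,c)$, so the upper bound and its extremal graph are settled.

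The lower bound cannot be obtained this way: Theorem \ref{ltheo} only yields ${}^0R_{\gamma}(G)\ge {}^0R_{\gamma}(T_n(c'))$, and since ${}^0R_{\gamma}(T_n(\cdot))$ is likewise decreasing in the number of parts, this is strictly weaker than the claim when $c'>c$ (for instance $G=C_5\in\mho(5,2)$ has $\chi=3$). Instead I would prove directly that $T_n(c)$ minimizes ${}^0R_{\gamma}$ over \emph{all} $K_{c+1}$-free graphs on $n$ vertices; this suffices because every member of $\mho(n,c)$ is $K_{c+1}$-free and $T_n(c)\in\mho(n,c)$. Let $\mathbf d$ be the degree sequence of such a $G$ with $\sum_i d_i=2m$, and let $\mathbf d^{*}$ (sum $2m^{*}$) be the degree sequence of $T_n(c)$. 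By Tur\'an's theorem $m\le m^{*}$; distribute the even surplus $2m^{*}-2m$ as nonnegative integers added to the entries of $\mathbf d$, obtaining an integer sequence $\mathbf d'$ with $d_i'\ge d_i$ and $\sum_i d_i'=2m^{*}$. Because $\mathbf d^{*}$ is the balanced integer sequence of its length and sum, it is the minimum in the majorization order, so $\mathbf d'\succ \mathbf d^{*}$. As $\phi(x)=x^{\gamma}$ is decreasing and strictly convex on $(0,\infty)$ for $\gamma<0$, monotonicity gives $\sum_i\phi(d_i)\ge\sum_i\phi(d_i')$ and Karamata's inequality gives $\sum_i\phi(d_i')\ge\sum_i\phi(d_i^{*})$, whence ${}^0R_{\gamma}(G)\ge {}^0R_{\gamma}(T_n(c))$.

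For equality, the first step (strictly decreasing $\phi$, $d_i'\ge d_i$) forces $\mathbf d'=\mathbf d$, i.e. $m=m^{*}$, and Tur\'an's uniqueness theorem then gives $G=T_n(c)$. I expect the genuine obstacle to lie precisely here, in the non-regular case $c\nmid n$: then $T_n(c)$ is not regular, so the naive Jensen estimate $\sum_i\phi(d_i)\ge n\,\phi(2m/n)$ is not tight, and the padding-plus-Karamata device is what repairs it. It is also worth noting that symmetrization is the wrong tool for this minimization, since the two Zykov clone moves satisfy $\Delta_1+\Delta_2\ge 0$ by convexity and so cannot both decrease ${}^0R_{\gamma}$. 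Finally, the hypothesis $\gamma\le -1$ is needed only for the upper bound (Theorem \ref{cm} and the monotonicity of ${}^0R_{\gamma}(PA_n(\cdot))$); the lower bound above in fact holds for every $\gamma<0$.
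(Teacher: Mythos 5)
Your proof is correct, and it reaches the result by routes genuinely different from the paper's, most notably for the lower bound. For the upper bound, both you and the paper reduce to Theorem \ref{cm}, but differently: the paper argues structurally that a maximizer in $\mho(n,c)$ must, by Lemma \ref{el}(1), be a copy of $K_c$ with trees attached, hence has chromatic number exactly $c$, and then invokes the proof of Theorem \ref{cm}; you instead apply Theorem \ref{cm} directly with $c'=\chi(G)\ge\omega(G)=c$ and close the gap with the strict monotonicity of $k\mapsto{}^0R_{\gamma}(PA_n(k))$. Your version avoids the paper's somewhat informally justified structural claim, and the equality analysis (strict monotonicity forces $c'=c$, then the equality clause of Theorem \ref{cm} applies) is tidier. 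For the lower bound, the paper proves exactly the strengthening you identify --- $T_n(c)$ minimizes ${}^0R_{\gamma}$ over all graphs with clique number at most $c$ --- but by iterated symmetrization: take a maximum-degree vertex $u$, set $A=N_G(u)$, and replace $G$ by the join of $G[A]$ with an independent set on $V(G)\setminus A$; every degree weakly increases, the clique number stays at most $c$, and recursing inside $G[A]$ terminates in a complete multipartite graph, which is then balanced via Lemma \ref{cb}. You instead quote Tur\'an's theorem for the edge count, pad the degree sequence up to the sum $2m^{*}$, and combine majorization-minimality of the balanced integer sequence with Karamata's inequality; equality is then settled cleanly by Tur\'an's uniqueness. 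Both arguments are sound, and both establish the claim for every $\gamma<0$ (as you note, $\gamma\le -1$ enters only through Theorem \ref{cm}); yours is shorter if Tur\'an's theorem and its uniqueness statement are taken as known, while the paper's is self-contained, in effect re-deriving the degree-domination form of Tur\'an's theorem, at the cost of a fussier equality discussion along the chain of transformations. One side remark of yours is off the mark: symmetrization is not ``the wrong tool'' for this minimization --- the paper's neighborhood symmetrization, in which \emph{all} degrees weakly increase, is precisely suited to minimizing a sum of a decreasing function of the degrees; your objection applies only to the Zykov clone-swap moves, whose two opposite variants indeed cannot both decrease a convex-type objective.
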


\begin{proof} The main arguments of this proof are similar to those of the proof of Theorem 3.3 from \cite{xd}.\\
	Upper bound: Let $G'\in \mho(n,c)$ having maximum zeroth-order general Randi\'c index. Since $G'$ has clique number $c$, we can assume that $G'$ contains a clique $\{v_1,v_2,\ldots,v_c\}$.
	From Lemma \ref{el} (1), we can see that $G'$ must be a graph obtained by attaching to $v_i$ some tree $T_i$ for $1\le i\le c$. Then the chromatic number of $G'$ is $c$, and the result immediately follows from the proof of Theorem \ref{cm}.
	
	Lower bound: Let $\mho'(n,c)$ be the set of graphs having order $n$ and clique number less than or equal to $c$. We shall prove the claim below first.
	
	{\bf Claim 1.} For any graph $G\in \mho'(n,c)$, we have
	\[^0R_{\gamma}(G)\ge {^0R}_{\gamma}(T_n(c))\]
	and equality holds if and only if $G= T_n(c)$.
	
	{\bf Proof of Claim 1.} If $G=K_{n_1,\ldots,n_c}$, then by Lemma \ref{cb}    we have
	\[^0R_{\gamma}(G)={^0R}_{\gamma}(K_{n_1,\ldots,n_c})\ge ^0R_{\gamma}(T_n(c))\]
	and equality holds if and only if $G= T_n(c)$.	
	
	Otherwise, $G$ is not a multipatite complete graph of the form $K_{n_1,\ldots,n_c}$. Let $u\in V(G)$ such that $u$ has maximum degree $d(u)=\triangle(G)$ in $G$. Let $A=N_G(u)$ and $B=V(G)\backslash A$. The clique number of the induced subgraph of $G$ by $A$, $G[A]$, is at most $c-1$ since $\omega(G)\le c$. Now we construct a new graph $G^*$ on the vertex set $V(G)$ as follows: $G^*$ is obtained from the subgraph $G[A]$ and the subset $B$ by joining by edges all vertices in $A$ to all vertices of $B$ and removing all possible edges which have both ends in $B$. %Fig. ??? illustrates the construction of the graph $G^*$.
	One can easily notice that $B$ is an independent set of $G^*$ and $\omega(G^*)\le c$. Let $w\in V(G^*)=V(G)$; if $w\in A$ we have $d_{G^*}(w)\ge d_G(w)$ from the construction of $G^*$ and if $w\in B$ we have $d_{G^*}(w)\ge d_G(w)$ by the choice of $u$. This implies that $^0R_{\gamma}(G^*)\le {^0R}_{\gamma}(G)$. If $G^*$ is isomorphic to a complete $t$-partite graph of order $n$, where $2\leq t \le c$, then we have $^0R_{\gamma}(G)\ge ^0R_{\gamma}(G^*)=^0R_{\gamma}(K_{n_1,\ldots,n_t})\ge ^0R_{\gamma}(T_n(t)) \ge ^0R_{\gamma}(T_n(c))$. \\
The inequality $^0R_{\gamma}(T_n(t)) \ge ^0R_{\gamma}(T_n(c))$ follows since $T_n(t)$
has vertex degrees equal to $n-\lfloor \frac{n}{t}\rfloor$ and $n-\lceil \frac{n}{t}\rceil$, which are less than or equal to the vertex degrees of $T_n(c)$.
	
	Otherwise, we repeat the above process on $G[A]$ by at most $c-2$ times (during this process if $G_i$ is isomorphic to a complete $t$-partite graph of order $n$, we stop the above process), obtaining a sequence of graphs:
	\[G=G_0,G_1,\ldots,G_r,G_{r+1},\ldots,G_{p-1},G_p=K_{n_1,\ldots,n_t}; \hspace{.3cm} t\le c\]
	such that 
	$^0R_{\gamma}(G)={^0R}_{\gamma}(G_0)\ge {^0R}_{\gamma}(G_1)\ge \ldots \ge {^0R}_{\gamma}(G_{p-1})\ge {^0R}_{\gamma}(G_p)$  $={^0R}_{\gamma}(K_{n_1,\ldots,n_t})\ge {^0R}_{\gamma}(T_n(c)).$
	
	Since  $G$ is not a multipatite complete graph of the form $K_{n_1,\ldots,n_c}$, then in the above sequence of graphs there must exist two consecutive non-isomorphic graphs $G_r$ and $G_{r+1}$ such that : $u$ being a vertex with maximum degree in $G_r$ and denoting $A=N_{G_r}(u)$ and $B=V(G_r)\backslash A$, when we transform $G_r$ to $G_{r+1}$, there must exist a vertex $w$ in $A$ or $B$ such that $d_{G_r+1}(w)> d_{G_r}(w)$. Hence,  
	\begin{align*}
	^0R_{\gamma}(G)={^0R}_{\gamma}(G_0)&\ge {^0R}_{\gamma}(G_1)\ge \cdots \ge ^0R_{\gamma}(G_r)> ^0R_{\gamma}(G_{r+1})  \ge \ldots  \\& \ge {^0R}_{\gamma}(G_{p-1})\ge  {^0R}_{\gamma}(G_p)={^0R}_{\gamma}(K_{n_1,\ldots,n_t})\ge {^0R}_{\gamma}(T_n(c))
	\end{align*}
	and the proof of the claim is complete.
	
	Consequently, we have shown that for any graph $G\in    \mho'(n,c)$, $^0R_{\gamma}(G)$ reaches its minimum in $\mho'(n,c)$, equal to $^0R_{\gamma}(T_n(c))=(c-r)\left\lfloor{\frac{n}{c}}\right\rfloor{(n-\left\lfloor{\frac{n}{c}}\right\rfloor)}^{\gamma}+r\left\lceil{\frac{n}{c}}\right\rceil(n-\left\lceil{\frac{n}{c}}\right\rceil)^{\gamma}$, only for $T_n(c)$. Note that     $\mho(n,c)   \subseteq   \mho'(n,c)$    with $T_n(c)\in  \mho(n,c)$    and our lower bound was proved.  \hspace*{4.5cm}                                                      $\square$
\end{proof}

\subsection{Extremal graphs w.r.t. zeroth-order general Randi\'c index in terms of number of cut edges}

In this subsection, we will investigate the bounds on zeroth-order general Randi\'c index in terms of number of cut edges. We shall also characterize the graphs which will provide the extremal values. Let $\Omega(n,c)$ be the set of connected graphs having order $n$ and $c>0$ cut edges. Let $C_{n-c}^c$ be a graph obtained by attaching $c$ pendant vertices to one vertex of cycle $C_{n-c}$. The {\it kite} graph, $K_n^c$, is obtained by identifying one vertex of $K_c$ with one pendant vertex of path $P_{n-c+1}$. It is easy to see that $K_n$ and $C_n$ have the minimal and maximal zeroth-order general Randi\'c index among all connected $n$-vertex graphs without any cut edge, respectively.

\begin{thm}
	Let $G\in \Omega(n,c)$ and $1\le c\le n-3$, then for $\gamma<0$ we have
	\[^0R_{\gamma}(G)\le c+(n-c-1)2^{\gamma}+(c+2)^{\gamma}\]
	and the equality holds if and only if $G= C_{n-c}^c$.
\end{thm}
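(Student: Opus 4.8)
The stated value is exactly $^0R_{\gamma}(C_{n-c}^c)$: the graph $C_{n-c}^c$ has $c$ pendant vertices of degree $1$, one cycle vertex of degree $c+2$ (the one carrying the pendants), and $n-c-1$ remaining cycle vertices of degree $2$, so that $^0R_{\gamma}(C_{n-c}^c)=c+(n-c-1)2^{\gamma}+(c+2)^{\gamma}$. Hence the whole task is to show that a graph $G\in\Omega(n,c)$ of maximum $^0R_{\gamma}$ must be $C_{n-c}^c$; the hypothesis $c\le n-3$ is used precisely to guarantee $n-c\ge 3$, so that a genuine cycle $C_{n-c}$ exists. I would fix a maximizer $G$ and analyse it through its bridge structure: deleting the $c$ cut edges splits $G$ into its $2$-edge-connected components, which together with the cut edges form a tree on $c+1$ component-nodes.

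The first reduction replaces every nontrivial $2$-edge-connected component by a cycle on the same vertex set. If a component $B$ on $m$ vertices is not a cycle it has a vertex of in-block degree $\ge 3$, and swapping $B$ for $C_{m}$ leaves every vertex of $B$ with the same attached cut edges but with its in-block degree lowered to $2$; since $x^{\gamma}$ is decreasing for $\gamma<0$ this does not decrease any vertex contribution and strictly increases it somewhere, so $^0R_{\gamma}$ strictly grows. Crucially a cycle is still $2$-edge-connected, so no cut edge is created or destroyed and $G$ stays in $\Omega(n,c)$. After this step every block is a cycle, and every cut edge lies in a pendant tree hanging off the cycles.

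Next I would apply Lemma \ref{tl} repeatedly: whenever two vertices both carry attached branches, transferring the branches of the smaller-degree vertex to the larger one strictly increases $^0R_{\gamma}$. Iterating this flattens every pendant tree into single pendant edges and, by the same mechanism, drags each cycle so that it passes through one fixed vertex $u$, all pendants being collected at $u$ as well. The reduced graph is then $k$ cycles sharing the vertex $u$ together with $c$ pendants at $u$, for which $^0R_{\gamma}=c+(n-c-1)2^{\gamma}+(2k+c)^{\gamma}$. Because $x^{\gamma}$ is decreasing for $\gamma<0$ this is maximal exactly when $k=1$, and the $k=1$ graph is precisely $C_{n-c}^c$; chaining the strict increases shows the bound holds for every $G$, with equality only for $C_{n-c}^c$.

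I expect the delicate point to be the rigidity of the constraint ``exactly $c$ cut edges''. The obvious move of deleting a cycle edge via Lemma \ref{el}(1) is illegitimate, because breaking a cycle of length $a$ turns its other $a-1$ edges into bridges and so increases $c$; for the same reason Lemma \ref{tl}, which preserves the number of edges, cannot lower the cyclomatic number either. Consequently one cannot descend to the single-cycle case by a single local operation, and the elimination of surplus cycles must be read off from the monotonicity of $(2k+c)^{\gamma}$ in $k$ as above. A second point requiring care is that the block-replacement cannot be performed one edge at a time, since a $2$-edge-connected block need not contain an edge whose deletion preserves $2$-edge-connectivity (for instance $K_{2,t}$); it is the whole-block swap, justified by the vertexwise degree comparison rather than by Lemma \ref{el}, that makes this step valid.
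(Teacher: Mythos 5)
Your proposal is essentially correct and would prove the theorem, but it takes a genuinely different and noticeably longer route than the paper. The paper also fixes a maximizer in $\Omega(n,c)$ and uses Lemma \ref{tl}, but only to establish two structural claims: (i) every cut edge of the maximizer is a pendant edge (for a non-pendant bridge $u_1v_1$ with $d(u_1)\ge d(v_1)>1$, transfer all neighbours of $v_1$ other than $u_1$ to $u_1$; since $u_1v_1$ is a bridge these neighbours avoid $N_G[u_1]$, the hypothesis of Lemma \ref{tl} holds, and the bridge structure is preserved), and (ii) all these pendant edges share a common vertex. At that point the maximizer is a connected bridgeless graph $G_0$ with $c$ pendants attached at a single vertex $v_0$, and the theorem follows from a single inequality: every vertex of $G_0$ has degree at least $2$, so for $\gamma<0$ each of the $n-c-1$ non-hub vertices contributes at most $2^{\gamma}$ and $v_0$ contributes at most $(c+2)^{\gamma}$, with equality forcing $G_0$ to be $2$-regular, hence $G_0=C_{n-c}$. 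This one degree bound does all the work of your block-by-cycle replacement and of your comparison of $(2k+c)^{\gamma}$ over the number $k$ of independent cycles: a flower hub, a theta block, or any chord forces some vertex of $G_0$ to have degree greater than $2$ and is penalized automatically. So the ``delicate point'' you isolate, that surplus cycles cannot be destroyed by a legal local move and must be eliminated through the monotonicity of $(2k+c)^{\gamma}$ in $k$, is real for your route but simply never arises in the paper's. What your route buys in exchange is an explicit normal form for graphs in $\Omega(n,c)$, at the cost of extra bookkeeping.

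One detail in your step 2 needs to be made honest. Lemma \ref{tl} as stated transfers \emph{all} of $N_G(w)\setminus N_G[v]$, and such a full transfer can change the number of cut edges once cycle edges are swept along with the branches: in a $4$-cycle $vwxy$ with one pendant at $v$ and one at $w$, the full transfer from $w$ to $v$ yields a triangle with three pendant edges at $v$, i.e.\ a graph with three cut edges instead of two, so the move exits $\Omega(n,c)$ even though $^0R_{\gamma}$ increases. Your ``transfer the branches'' steps therefore require the subset version of Lemma \ref{tl} (moving only the bridge-hanging trees; the proof is the same one-line computation with $t$ equal to the number of moved neighbours), together with the observation that reattaching a pendant tree preserves the cut-edge count. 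This is fixable, and indeed the paper's own Claim 2 quietly relies on the same refinement, but as written your iteration step is not literally an application of the stated lemma.
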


\begin{proof}
	Suppose that the graph $G\in \Omega(n,c)$ has the maximum zeroth-order general Randi\'c index, for $\gamma<0$, with cut edge set $C=\{e_1,e_2,\ldots,e_c\}$. To prove the main result we first prove two claims.
	
	{\bf Claim 1.} Let $e\in C$, then $e$ must be a pendant edge.
	
	{\bf Proof of Claim 1.} On contrary suppose that $e_1=u_1v_1$ is a non-pendant edge in $G$ such that $d_G(u_1)\ge d_G(v_1)>1$. Suppose that $N_G(v_1)\backslash \{u_1\}=\{v_{11},v_{12},\ldots,v_{1t}\}$. Now we construct a new graph $$G^*=G-\{v_1v_{11},v_1v_{12},\ldots,v_1v_{1t}\}+\{u_1v_{11},u_1v_{12},\ldots,u_1v_{1t}\}.$$ Clearly, $G^*\in \Omega (n,c)$. Since $e_1$ is a cut edge in $G$, so $N_G(v_1)\backslash N_G[u_1]=\{v_{11},v_{21},\ldots,v_{t1}\}$. Then by Lemma \ref{tl} and the case when $u_1$ and $v_1$ are adjacent we have $^0R_{\gamma}(G^*)>{^0R_{\gamma}}(G)$, which contradicts the maximality of $G$.
	
	{\bf Claim 2.} The edges of $C$ have a common vertex. 
	
	{\bf Proof of Claim 2.} From above all the edges $e_i$, $1\le i\le c$ are pendant. On contrary suppose that $e_1=u_1v_1$ and $e_2=u_2v_2$ are two distinct edges in $G$ such that $d_G(u_i)=1$ for $i\in \{1,2\}$ and $v_1\ne v_2$. By applying Lemma \ref{tl} on $v_1$ and $v_2$, we obtain a new graph $G^{**}\in\Omega(n,c)$ such that $^0R_{\gamma}(G^{**})>{^0R}_{\gamma}(G)$, which is a contradiction.
	
	From Claim 2, we conclude that $G$ must be a graph obtained by attaching $c$ pendant vertices to one vertex, say $v_0$ of $G_0$ where $G_0$ is a connected graph without cut edges. Considering that in $G_0$  any vertex has degree greater or equal to 2, we have
	\[^0R_{\gamma}(G)=\sum_{w\in V(G_0)\backslash\{v_0\}}(d_{G_0}(w))^{\gamma}+(d_{G_0}(v_0)+c)^{\gamma}+c\le (n-c-1)2^{\gamma}+(c+2)^{\gamma}+c\]
	and the equality holds if and only if $d_{G_0}(v)=2$ for each $v\in G_0$, i. e., $G_0= C_{n-c}$. Equivalently, $G= C_{n-c}^c$ and we are done.  \hspace*{2.5cm}                                                      $\square$
\end{proof}

\subsection{Extremal graphs w.r.t. ${^0R}_{\gamma}$ in terms of  vertex(edge) connectivity}
Let $V_{n,c}$ and $E_{n,c}$ be the set of all graphs of order $n$ with connectivity and edge-connectivity, respectively, at most $c\le n-1$. The set of all graphs of order $n$ having connectivity and edge-connectivity equal to $c\le n-1$ is denoted by $V_n^c$ and $E_n^c$, respectively.

\begin{thm}\label{edgecon}
	For any graph $G\in V_n^c $ with $1\le c\le n-1$ and $-1\leq\gamma <0$, we have
	\[^0R_{\gamma}(G)\ge c(n-1)^{\gamma}+(n-c-1)(n-2)^{\gamma}+c^{\gamma}\]
	with equality holding if and only if $G= K_c+(K_{n_1}\cup K_{n_2})$ with $n_1,n_2\geq 1$ and $n_1+n_2=n-1$ for $c=1$ and $\gamma =-1$ and $G= K_c+(K_{1}\cup K_{n-c-1})$ for $c=1$ and $-1<\gamma<0$ or $c\ge2$.
\end{thm}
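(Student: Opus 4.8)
The plan is to show that among all graphs of connectivity exactly $c$ the minimizer of ${^0R_{\gamma}}$ is obtained by ``saturating'' a minimum vertex cut with edges, and that the resulting one-parameter family is governed by the function $f$ of Lemma \ref{fin}. First I would dispose of the boundary case $c=n-1$: here $G$ is forced to be $K_n$, and the direct computation ${^0R_{\gamma}}(K_n)=n(n-1)^{\gamma}$ matches the claimed bound when $c=n-1$ (there $n-c-1=0$, so the middle term vanishes and $c(n-1)^{\gamma}+c^{\gamma}=n(n-1)^{\gamma}$). So assume $1\le c\le n-2$, whence $G$ is non-complete and admits a vertex cut $S$ with $|S|=c$; the components of $G-S$ split into two nonempty groups $A,B$ with $|A|=x$ and $|B|=n-c-x$, where automatically $1\le x\le n-c-1$, and no edge of $G$ joins $A$ to $B$.

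Next I would saturate using Lemma \ref{el}(2): each insertion of an edge between two (non-isolated) vertices strictly decreases ${^0R_{\gamma}}$. I add every missing edge inside $S$, inside $A$, inside $B$, between $S$ and $A$, and between $S$ and $B$, but never between $A$ and $B$; this keeps $S$ a separating set and yields $G'=K_c+(K_x\cup K_{n-c-x})$ with ${^0R_{\gamma}}(G')\le{^0R_{\gamma}}(G)$, the inequality strict unless no edge was added, i.e. unless $G=G'$. In $G'$ the $c$ cut-vertices have degree $n-1$, the $x$ vertices of $A$ have degree $x+c-1$, and the $n-c-x$ vertices of $B$ have degree $n-1-x$; hence $^0R_{\gamma}(G')=c(n-1)^{\gamma}+f(x)$, where $f(x)=x(x+c-1)^{\gamma}+(n-c-x)(n-1-x)^{\gamma}$ is exactly the function of Lemma \ref{fin}.

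Now I would invoke Lemma \ref{fin}: for $-1\le\gamma<0$ the function $f$ on $1\le x\le n-c-1$ attains its minimum $f(1)=c^{\gamma}+(n-c-1)(n-2)^{\gamma}$ at the endpoints. Combining the two steps gives $^0R_{\gamma}(G)\ge{^0R_{\gamma}}(G')\ge c(n-1)^{\gamma}+c^{\gamma}+(n-c-1)(n-2)^{\gamma}$, which is the asserted lower bound, attained by $K_c+(K_1\cup K_{n-c-1})\in V_n^c$. For the equality discussion, equality throughout forces $G=G'$ (by the strictness in Lemma \ref{el}, so $G$ is itself of the form $K_c+(K_x\cup K_{n-c-x})$) together with $f(x)=f(1)$. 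When $c\ge2$, or when $c=1$ and $-1<\gamma<0$, Lemma \ref{fin} shows $f$ is strictly minimized only at $x\in\{1,n-c-1\}$, both of which give the same graph $K_c+(K_1\cup K_{n-c-1})$ up to isomorphism; in the degenerate subcase $c=1,\gamma=-1$ Lemma \ref{fin} gives $f\equiv2$, so every value of $x$ realizes equality, producing precisely the graphs $K_1+(K_{n_1}\cup K_{n_2})$ with $n_1+n_2=n-1$.

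The main point requiring care is the saturation step: one must verify that adding all these edges keeps $S$ a separating set, so that the constructed graph is genuinely $K_c+(K_x\cup K_{n-c-x})$ (still of connectivity $c$, since its minimum degree is $\ge c$ and $S$ remains a cut), and that its index is \emph{exactly} $c(n-1)^{\gamma}+f(x)$. Once this identification is in place, Lemma \ref{fin} supplies both the bound and, through whether its derivative is strictly signed or vanishes identically, the bifurcation between the generic extremal graph $K_c+(K_1\cup K_{n-c-1})$ and the one-parameter family $K_1+(K_{n_1}\cup K_{n_2})$ arising in the single degenerate case $c=1,\gamma=-1$.
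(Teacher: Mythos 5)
Your proof is correct and follows essentially the same route as the paper: both reduce a graph in $V_n^c$ to the join family $K_c+(K_x\cup K_{n-c-x})$ using Lemma \ref{el}(2), then minimize $c(n-1)^{\gamma}+f(x)$ via Lemma \ref{fin}, with the same bifurcation at $c=1,\gamma=-1$ giving the one-parameter family of extremal graphs. The only difference is organizational: you saturate an arbitrary graph directly (grouping the components of $G-S$ into two blocks), whereas the paper assumes $G$ is a minimizer and proves that $G-S$ has exactly two components before invoking the same two lemmas.
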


\begin{proof}
	Suppose $G\in V_n^c$ is a graph with minimal $^0R_{\gamma}(G)$ with $c$-vertex cut $S=\{v_1,v_2,\ldots,v_c\}$. By Lemma \ref{el} (2), the induced subgraph $G[S]$ is a complete graph $K_c$. 
	
	For $c=n-1$, there is a unique graph $K_n$ in the set $V_n^c$, which can be deal as a special case of $G=K_c+(K_{1}\cup K_{n-c-1})$ with $c=n-1$. So, in what follows we shall consider $1\le c\le n-2$. 
	
	{\bf Claim 1.} $G-S$ has exactly two components.
	
	{\bf Proof of claim 1.} On contrary suppose that  $G-S$ has at least three components $G_1,G_2$ and $G_3$ having $u_i\in V(G_i)$ for $i=1,2$. Then we find $G+u_1u_2\in V_n^c$, which implies $^0R_{\gamma}(G+u_1u_2)<{^0R}_{\gamma}(G) $, which contradicts the choice of $G$. Now we assume that $G-S=G_1\cup G_2$, where $G_1$ and $G_2$ are the components of $G-S$. From Lemma \ref{el} (2), we conclude that $G_1$ and $G_2$ are cliques and each vertex in $S$ is adjacent to all vertices in $G_1\cup G_2$. Consequently, we get $G=K_c+(K_{n_1}\cup K_{n_2})$ where $n_1+n_2=n-c$.
	
	Without loss of generality, assume that $n_1\le n_2$ in $G=K_c+(K_{n_1}\cup K_{n_2})$. We get
	\[^0R_{\gamma}(G)=c(n-1)^{\gamma}+n_1(n_1+c-1)^{\gamma}+n_2(n_2+c-1)^{\gamma}.\] 
	For $c=1$ and $\gamma =-1$, we have $^0R_{\gamma}(G)=\frac{1}{n-1}+2$ for any graph $G$ of the form $G= K_c+(K_{n_1}\cup K_{n_2})$ with $1\leq n_1\leq n_2\leq n-2$ and $n_1+n_2=n-1$.
	For $c\geq 2$ or $-1<\gamma <0$ we need to determine the minimal value of the following function:
	\[f(n_1,n_2)=n_1(n_1+c-1)^{\gamma}+n_2(n_2+c-1)^{\gamma},\]
where $1\leq n_1\leq n_2\leq n-c-1$ and $n_1+n_2=n-c$.
	
	From Lemma \ref{fin} we deduce that the $f(n_1,n_2)$ is minimal when $n_1=1$ and $n_2=n-c-1$ for $n_1+n_2=n-c$. Hence $^0R_{\gamma}(G)$ reaches its minimum value if and only if $G=K_c+(K_1\cup K_{n-c-1})$.  \hspace*{4.5cm}                                                      $\square$
\end{proof} 

Let $\psi(x)=x^{\gamma}+(n-x-1)(n-2)^{\gamma}+x(n-1)^{\gamma}$ for $x>0$ and $\gamma<0$, we have $\psi'(x)=\gamma x^{\gamma-1}-(n-2)^{\gamma}+(n-1)^{\gamma}<0$. This implies that $\psi(x)$ is strictly decreasing for $x>0$. Therefore, we have
\begin{equation}\label{eq}
^0R_{\gamma}(K_i+(K_1\cup K_{n-i-1}))<{^0R}_{\gamma}(K_{i-1}+(K_1\cup K_{n-i}))
\end{equation} for $2\le i\le c$.

Considering that $V_{n,c}=\cup_{i=1}^cV_{n}^i$, by Theorem \ref{edgecon} and inequality \ref{eq}, we have the following result:

\begin{thm}\label{3.7}
	For any graph $G\in V_{n,c}$ with $1\le c\le n-1$ and $-1\leq \gamma <0$, we have
	\[^0R_{\gamma}(G)\ge c^{\gamma}+(n-c-1)(n-2)^{\gamma}+c(n-1)^{\gamma}\] 
	and equality holds if and only if  $G= K_c+(K_{n_1}\cup K_{n_2})$ with $n_1,n_2\geq 1$ and $n_1+n_2=n-1$ for $c=1$ and $\gamma =-1$ and $G= K_c+(K_{1}\cup K_{n-c-1})$ for $c=1$ and $-1<\gamma<0$ or $c\ge2$.
\end{thm}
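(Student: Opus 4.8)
The plan is to reduce everything to Theorem \ref{edgecon} via the decomposition $V_{n,c}=\bigcup_{i=1}^{c}V_n^i$ together with the monotonicity of the auxiliary function $\psi(x)=x^{\gamma}+(n-x-1)(n-2)^{\gamma}+x(n-1)^{\gamma}$ established just before the statement. The observation I would record first is that for each $i$ the quantity $\psi(i)$ is exactly the minimum of $^0R_{\gamma}$ over $V_n^i$: a direct degree count on $K_i+(K_1\cup K_{n-i-1})$ gives $i$ vertices of degree $n-1$, one vertex of degree $i$, and $n-i-1$ vertices of degree $n-2$, so its index equals $\psi(i)$, and Theorem \ref{edgecon} (applied with parameter $i$) says this value is the minimum attained in $V_n^i$.

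First I would take an arbitrary $G\in V_{n,c}$. Its connectivity is some integer $i$ with $1\le i\le c$, so $G\in V_n^i$, and Theorem \ref{edgecon} yields $^0R_{\gamma}(G)\ge \psi(i)$. Next I would invoke the fact that $\psi$ is strictly decreasing on the relevant range, which is precisely the content of the computation $\psi'(x)=\gamma x^{\gamma-1}-(n-2)^{\gamma}+(n-1)^{\gamma}<0$, equivalently inequality (\ref{eq}), to conclude that $\psi(i)\ge \psi(c)$ with strict inequality whenever $i<c$. Chaining these two estimates gives $^0R_{\gamma}(G)\ge \psi(c)=c^{\gamma}+(n-c-1)(n-2)^{\gamma}+c(n-1)^{\gamma}$, which is the claimed lower bound.

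For the equality analysis I would trace back the two inequalities. Equality in $\psi(i)\ge\psi(c)$ forces $i=c$ by strict monotonicity (when $c=1$ this is automatic, since then $V_{n,1}=V_n^1$ and there is nothing to compare), so any extremal $G$ must have connectivity exactly $c$, i.e. $G\in V_n^c$. Equality in $^0R_{\gamma}(G)\ge\psi(c)$ then makes $G$ extremal inside $V_n^c$, and Theorem \ref{edgecon} identifies these graphs precisely: $G=K_c+(K_{n_1}\cup K_{n_2})$ with $n_1,n_2\ge 1$ and $n_1+n_2=n-1$ in the degenerate case $c=1,\ \gamma=-1$, and $G=K_c+(K_1\cup K_{n-c-1})$ otherwise.

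Since this is essentially a corollary, I do not expect a genuine obstacle; the only point demanding care is the equality characterization. One must make sure that no graph living in a strictly lower connectivity layer $V_n^i$ with $i<c$ can tie the bound, which is exactly what the strictness in the monotonicity of $\psi$ guarantees, and that the multi-graph family arising for $c=1,\ \gamma=-1$ is faithfully inherited from Theorem \ref{edgecon} rather than introduced anew. Everything else is a mechanical combination of the previous theorem and inequality (\ref{eq}).
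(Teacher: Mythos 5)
Your proposal is correct and follows exactly the paper's route: the paper derives Theorem \ref{3.7} in one line from the decomposition $V_{n,c}=\cup_{i=1}^{c}V_n^i$, Theorem \ref{edgecon}, and inequality (\ref{eq}), which is precisely your argument. You merely spell out the details (the degree count showing the layer minimum equals $\psi(i)$, and the strictness of $\psi(i)>\psi(c)$ for $i<c$ in the equality analysis) that the paper leaves implicit.
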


Because $\kappa(G)\le \kappa'(G)$ we have $E_n^c\subseteq V_{n,c}$. We get
$K_c + ( K_{1}\cup K_{n-c-1})\in E_{n}^{c}$ for $c\geq 1$ but $K_1+(K_{n_{1}}\cup K_{n_{2}})\notin E_{n}^{1}$ if $1<n_1\leq n_2$ with $n_1+n_2=n-1$.
From Theorem \ref{3.7} we deduce the following corollary:

\begin{cor}\label{cor3.1}
	For any graph $G\in E_n^c$ with $1\le c\le n-1$ and $-1\leq\gamma<0$, we have
	\[^0R_{\gamma}(G)\ge c^{\gamma}+(n-c-1)(n-2)^{\gamma}+c(n-1)^{\gamma}\]
	and equality holds if and only if $G= K_c+(K_1\cup K_{n-c-1})$.
\end{cor}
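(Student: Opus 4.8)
The plan is to deduce the corollary directly from Theorem \ref{3.7}, using the inclusion $E_n^c\subseteq V_{n,c}$ recorded just above it. Indeed, $\kappa(G)\le\kappa'(G)$ for every graph, so if $G\in E_n^c$ then $\kappa(G)\le\kappa'(G)=c$ and hence $G\in V_{n,c}$. Because the lower bound appearing in Theorem \ref{3.7} is the very same quantity $c^{\gamma}+(n-c-1)(n-2)^{\gamma}+c(n-1)^{\gamma}$, the inequality of the corollary is inherited verbatim by every $G\in E_n^c$, and no new estimate is needed. The whole task therefore reduces to pinning down which graphs attain equality.

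To settle equality I would intersect the equality set of Theorem \ref{3.7} with $E_n^c$. Theorem \ref{3.7} states that within $V_{n,c}$ equality occurs only for $K_c+(K_1\cup K_{n-c-1})$ when $c\ge 2$ or $-1<\gamma<0$, and for the whole family $K_1+(K_{n_1}\cup K_{n_2})$ with $n_1+n_2=n-1$ in the single borderline case $c=1,\ \gamma=-1$. Hence the extremal graphs of the corollary are exactly those on these lists whose edge-connectivity is precisely $c$. First I would confirm that $K_c+(K_1\cup K_{n-c-1})\in E_n^c$: the vertex contributed by the $K_1$ summand is joined only to the $c$ vertices of the clique $K_c$, so it has degree $c$ and thus $\kappa'\le\delta=c$; since the vertex set of $K_c$ is a cut separating this vertex from $K_{n-c-1}$ one also gets $\kappa=c$, and $\kappa\le\kappa'\le\delta$ forces $\kappa'=c$.

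It remains to discard the spurious graphs from the borderline case. For $G=K_1+(K_{n_1}\cup K_{n_2})$ with $1\le n_1\le n_2$ and $n_1+n_2=n-1$, a direct minimum-cut count shows $\kappa'(G)=n_1$ (the $n_1$ edges joining the apex to $K_{n_1}$ form a smallest edge cut). Consequently $G\in E_n^1$ forces $n_1=1$, i.e. $G=K_1+(K_1\cup K_{n-2})=K_1+(K_1\cup K_{n-c-1})$, while every graph with $n_1\ge 2$ has $\kappa'=n_1\ge 2>1=c$ and is excluded. Thus in all cases the unique member of $E_n^c$ attaining the bound is $K_c+(K_1\cup K_{n-c-1})$, which indeed realises equality in Theorem \ref{3.7}, so the minimum of $^0R_{\gamma}$ over $E_n^c$ equals the stated value and is reached there alone. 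The only genuinely non-mechanical part is this edge-connectivity bookkeeping, namely verifying that the claimed extremal graph really lies in $E_n^c$ while the extra $c=1,\ \gamma=-1$ graphs with $n_1\ge 2$ do not; since all of it reduces to counting degrees and smallest edge cuts, I expect no serious obstacle, the heavy lifting having already been carried out in Theorem \ref{3.7}.
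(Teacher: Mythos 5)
Your proof is correct and takes essentially the same route as the paper: the paper also deduces the corollary from Theorem \ref{3.7} via the inclusion $E_n^c\subseteq V_{n,c}$ (from $\kappa(G)\le\kappa'(G)$), together with the observations that $K_c+(K_{1}\cup K_{n-c-1})\in E_{n}^{c}$ while $K_1+(K_{n_{1}}\cup K_{n_{2}})\notin E_{n}^{1}$ when $1<n_1\leq n_2$, so the borderline $c=1$, $\gamma=-1$ graphs drop out of the equality set. Your write-up simply makes explicit the edge-connectivity computations that the paper states without proof.
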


Again using inequality \ref{eq} and the inequalities $\kappa(G)\le \kappa'(G)\le \delta(G)$, we have the following corollary:

\begin{cor}
	Let G be any connected graph of order n and minimum degree $\delta(G)=c$, then
 for $-1\leq \gamma <0$ we have:
	\[^0R_{\gamma}(G)\ge c^{\gamma}+(n-c-1)(n-2)^{\gamma}+c(n-1)^{\gamma}\]
	with equality holding if and only if $G= K_c+(K_1\cup K_{n-c-1})$.
\end{cor}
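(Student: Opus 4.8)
The plan is to deduce this corollary directly from Corollary \ref{cor3.1} by invoking the standard chain of inequalities $\kappa(G)\le\kappa'(G)\le\delta(G)$ already recorded in the introduction, together with the monotonicity captured in inequality \ref{eq}. The key observation is that if $G$ is a connected graph of order $n$ with $\delta(G)=c$, then its edge-connectivity satisfies $\kappa'(G)\le\delta(G)=c$, so $G$ belongs to the set $E_{n,c}=\bigcup_{i=1}^{c}E_n^i$ of graphs of order $n$ with edge-connectivity at most $c$. This is exactly the structural home in which the bound was established.

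First I would argue the inequality. For each $i$ with $1\le i\le c$, Corollary \ref{cor3.1} gives the lower bound for graphs in $E_n^i$, and the function $\psi(x)=x^{\gamma}+(n-x-1)(n-2)^{\gamma}+x(n-1)^{\gamma}$ is strictly decreasing for $x>0$ by the computation preceding inequality \ref{eq}. Hence the minimum of $^0R_{\gamma}$ over $E_{n,c}$ is attained at the largest admissible connectivity value $i=c$, yielding
\[
^0R_{\gamma}(G)\ge \psi(c)=c^{\gamma}+(n-c-1)(n-2)^{\gamma}+c(n-1)^{\gamma}
\]
for every $G$ with $\kappa'(G)\le c$, and in particular for every $G$ with $\delta(G)=c$.

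Next I would settle the equality case. Since the extremal graph $K_c+(K_1\cup K_{n-c-1})$ satisfies $\delta=c$ (the single-vertex clique $K_1$ is joined only to the $c$ vertices of the cut, giving it degree exactly $c$), it lies in the prescribed family and realizes the bound, so equality is attainable. For the converse, suppose $G$ with $\delta(G)=c$ attains the bound. Because $\psi$ is strictly decreasing, equality forces $\kappa'(G)=c$ rather than any smaller value; thus $G\in E_n^c$, and Corollary \ref{cor3.1} identifies $G=K_c+(K_1\cup K_{n-c-1})$ uniquely. This also confirms consistency: the minimum-degree hypothesis cannot be met by any graph of strictly smaller edge-connectivity without violating strict monotonicity of $\psi$.

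The main subtlety I anticipate is the equality analysis rather than the inequality. One must check that no graph with $\delta(G)=c$ but $\kappa'(G)<c$ could tie the bound — this is handled precisely by the strictness in inequality \ref{eq}, which rules out any $i<c$ from achieving $\psi(c)$. A secondary point worth verifying is that the hypothesis $-1\le\gamma<0$ is inherited correctly, since Corollary \ref{cor3.1} and Lemma \ref{fin} (on which it rests) require exactly this range; the corollary's statement respects this, so no additional case work on $\gamma$ is needed.
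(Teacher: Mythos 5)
Your proposal is correct and follows exactly the route the paper intends: the paper derives this corollary, without further detail, from the chain $\kappa(G)\le\kappa'(G)\le\delta(G)$ together with Corollary \ref{cor3.1} and the strict monotonicity in inequality (\ref{eq}), which is precisely what you spell out. Your added care on the equality case (strictness of $\psi$ forcing $\kappa'(G)=c$, and checking that $K_c+(K_1\cup K_{n-c-1})$ indeed has minimum degree $c$) fills in details the paper leaves implicit, but it is the same argument.
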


We can see that $E_{n,c}=\cup_{i=1}^cE_{n}^{i}$. From Corollary \ref{cor3.1} and inequality (\ref{eq}), we can obtain the following result:

\begin{thm}
	For any graph $G\in E_{n,c}$ with $1\le c\le n-1$ and $-1\leq \gamma<0$, we have 
	\[^0R_{\gamma}(G)\ge c^{\gamma}+(n-c-1)(n-2)^{\gamma}+c(n-1)^{\gamma}\]
	with equality holding if and only if $G= K_c+(K_1\cup K_{n-c-1})$.	
\end{thm}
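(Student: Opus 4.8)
The plan is to reduce the statement for $E_{n,c}$ to the already established result for graphs of a fixed edge-connectivity, exploiting the decomposition $E_{n,c}=\bigcup_{i=1}^{c}E_n^{i}$ together with the monotonicity recorded in inequality (\ref{eq}). First I would take an arbitrary $G\in E_{n,c}$; by this decomposition there is a (unique) index $i$ with $1\le i\le c$ such that $G\in E_n^{i}$, that is, $G$ has edge-connectivity exactly $i$. Applying Corollary \ref{cor3.1} to $G$ in the class $E_n^{i}$ then yields
\[^0R_{\gamma}(G)\ge i^{\gamma}+(n-i-1)(n-2)^{\gamma}+i(n-1)^{\gamma},\]
with equality if and only if $G=K_i+(K_1\cup K_{n-i-1})$.

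Next I would compare the right-hand side across the admissible values of $i$. Writing $\psi(x)=x^{\gamma}+(n-x-1)(n-2)^{\gamma}+x(n-1)^{\gamma}$, inequality (\ref{eq}) asserts precisely that $\psi(i)<\psi(i-1)$ for $2\le i\le c$, so $\psi$ is strictly decreasing on $\{1,\dots,c\}$ and attains its minimum over this range at $i=c$. Chaining the two facts gives
\[^0R_{\gamma}(G)\ge \psi(i)\ge \psi(c)=c^{\gamma}+(n-c-1)(n-2)^{\gamma}+c(n-1)^{\gamma},\]
which is the claimed bound. I would also remark that $K_c+(K_1\cup K_{n-c-1})\in E_n^{c}\subseteq E_{n,c}$, so the bound is actually attained inside the class and is therefore sharp.

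For the equality characterization I would note that equality in the displayed chain forces both intermediate inequalities to be tight. The second one, $\psi(i)=\psi(c)$, holds only for $i=c$ by the strict monotonicity of $\psi$; having fixed $i=c$, the first one, $^0R_{\gamma}(G)=\psi(c)$, forces $G=K_c+(K_1\cup K_{n-c-1})$ by the equality clause of Corollary \ref{cor3.1}. Since the whole argument is just a composition of a previously proved lower bound with an elementary monotonicity statement, there is no genuine analytic obstacle here; the only point demanding care is to track the two equality conditions simultaneously so that they combine into the single extremal graph, and to confirm that this graph indeed belongs to $E_{n,c}$.
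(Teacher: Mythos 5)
Your proposal is correct and follows exactly the paper's (unwritten but clearly indicated) argument: the paper proves this theorem in one line by citing the decomposition $E_{n,c}=\bigcup_{i=1}^{c}E_n^{i}$, Corollary \ref{cor3.1}, and the monotonicity inequality (\ref{eq}), which are precisely the three ingredients you chain together. Your write-up simply makes explicit the equality-tracking that the paper leaves implicit, and it is sound.
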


One can notice that for any edge $e\in E(G)$, where $G\in V_{n,c}$(respectively $E_{n,c}$), $G-e$ also belongs to  $V_{n,c}$(respectively $E_{n,c}$). From  \cite{lz} we know that $S_n$ has the maximal index  $^0R_{\gamma}$ among all trees of order $n$ for $\gamma<0$ since the function $\varphi(x)=(x+1)^{\gamma}-x^{\gamma}$ is strictly increasing for $x>0$ if $\gamma <0$. So from Lemma \ref{tl} (ii) we have the following consequences:
\begin{thm}
	For any graph $G\in V_{n,c}$ with $1\le c\le n-1$ and $ \gamma<0$, we have
	\[^0R_{\gamma}(G)\le (n-1)+(n-1)^{\gamma}\]
	and the equality holds if and only if $G=S_n$.
\end{thm}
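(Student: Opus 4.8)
The plan is to prove that the star $S_n$ is the unique maximizer of $^0R_{\gamma}$ over $V_{n,c}$ in two stages: first reduce an arbitrary maximizer to a tree by deleting edges, and then quote the extremal result for trees. As a preliminary check, note that $S_n\in V_{n,c}$ since $\kappa(S_n)=1\le c$, and a direct computation gives $^0R_{\gamma}(S_n)=(n-1)\cdot 1^{\gamma}+(n-1)^{\gamma}=(n-1)+(n-1)^{\gamma}$, which is exactly the asserted bound. Thus it suffices to show that every other graph of $V_{n,c}$ has strictly smaller index.

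Since $V_{n,c}$ is finite, I would fix a graph $G$ of maximum $^0R_{\gamma}$ in $V_{n,c}$ and argue that $G$ is acyclic. Suppose not; then $G$ contains a cycle, and I would pick an edge $e=vw$ lying on that cycle. Because $e$ lies on a cycle it is not a bridge, so $G-e$ is still connected, and both endpoints satisfy $d(v),d(w)\ge 2$, so $G-e$ has no isolated vertex. Deleting an edge cannot increase the vertex connectivity, whence $\kappa(G-e)\le\kappa(G)\le c$ and $G-e\in V_{n,c}$; this is exactly the observation stated just before the theorem. Applying Lemma \ref{el} (1) to $e$ gives $^0R_{\gamma}(G-e)>{}^0R_{\gamma}(G)$, contradicting the maximality of $G$. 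Hence $G$ carries no cycle, and being connected it is a tree.

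It then remains to maximize over trees. Every $n$-vertex tree has connectivity $1\le c$ and therefore lies in $V_{n,c}$, so the maximum of $^0R_{\gamma}$ over $V_{n,c}$ coincides with its maximum over all trees of order $n$. By the result of \cite{lz}, which is driven by the strict monotonicity of $\varphi(x)=(x+1)^{\gamma}-x^{\gamma}$ for $\gamma<0$, the star $S_n$ is the unique tree of order $n$ with maximal $^0R_{\gamma}$. Combining this with the reduction forces $G=S_n$, establishing both the inequality and the equality case.

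The delicate point in this plan is the edge-deletion step: the deleted edge must be chosen so that all of the hypotheses of Lemma \ref{el} (1) and the membership $G-e\in V_{n,c}$ hold simultaneously. A cycle edge is the right choice, since it is a non-bridge (preserving connectedness and keeping $\kappa\le c$) whose two endpoints both have degree at least $2$ (so Lemma \ref{el} (1) applies and no isolated vertex appears). Everything else is either the quoted observation on connectivity under edge removal or the cited tree extremal theorem, so no further estimation is needed.
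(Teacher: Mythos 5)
Your proposal is correct and takes essentially the same route as the paper, whose proof is exactly the remark preceding the theorem: deleting an edge keeps a graph in $V_{n,c}$, deletion strictly increases $^0R_{\gamma}$ by Lemma \ref{el} (1), and the extremal tree result of \cite{lz} then identifies $S_n$ as the unique maximizer. Your write-up is in fact slightly more careful than the paper's, since you restrict the deletion to cycle edges so that connectedness and the degree hypotheses of Lemma \ref{el} (1) hold simultaneously.
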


\begin{thm}
	For any graph $G\in E_{n,c}$ with $1\le c\le n-1$ and $\gamma<0$, we have
	\[^0R_{\gamma}(G)\le (n-1)+(n-1)^{\gamma}\]
	and the equality holds if and only if $G=S_n$.
\end{thm}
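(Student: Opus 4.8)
The plan is to reduce an arbitrary member of $E_{n,c}$ to a spanning tree without decreasing ${}^0R_{\gamma}$, and then invoke the tree-extremal fact quoted just before the theorem. First I would fix a connected graph $G\in E_{n,c}$ and choose any spanning tree $T\subseteq G$. Since every tree has edge-connectivity $1\le c$, we have $T\in E_{n,c}$, and in particular $S_n\in E_{n,c}$, so the extremal graph lies in the class. Now view $G$ as $T$ together with the edges of $E(G)\setminus E(T)$, and add those edges back one at a time. In $T$, and in every intermediate graph, all $n\ge 2$ vertices are non-isolated (a connected spanning tree has minimum degree at least $1$, and adding edges only raises degrees), so Lemma \ref{el}(2) applies at each addition and each step strictly lowers the index. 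Chaining the steps gives ${}^0R_{\gamma}(G)\le {}^0R_{\gamma}(T)$, with strict inequality whenever $E(G)\neq E(T)$, i.e.\ whenever $G$ contains a cycle.

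Next I would bound ${}^0R_{\gamma}(T)$. By the result of \cite{lz} recalled above, $S_n$ has the maximum ${}^0R_{\gamma}$ among all trees of order $n$ for $\gamma<0$, and the strict monotonicity of $\varphi(x)=(x+1)^{\gamma}-x^{\gamma}$ makes it the \emph{unique} maximizer; hence ${}^0R_{\gamma}(T)\le {}^0R_{\gamma}(S_n)=(n-1)^{\gamma}+(n-1)$, with equality iff $T=S_n$. Combining the two estimates yields the claimed bound ${}^0R_{\gamma}(G)\le (n-1)+(n-1)^{\gamma}$.

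For the equality discussion I would split into cases. If $G$ is not a tree, the first inequality is strict, so ${}^0R_{\gamma}(G)<{}^0R_{\gamma}(T)\le {}^0R_{\gamma}(S_n)$ and the bound is not attained. If $G$ is a tree, then $G=T$ and the value equals ${}^0R_{\gamma}(S_n)$ exactly when $T=S_n$. In either case equality forces $G=S_n$, and since $S_n\in E_{n,c}$ the bound is achieved. The argument is the verbatim analogue of the preceding theorem for $V_{n,c}$, the only change being that a spanning tree automatically satisfies the edge-connectivity constraint $\kappa'(T)=1\le c$.

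The point that needs the most care is keeping the reduction inside the \emph{connected} class: the bound is genuinely false for disconnected graphs (a perfect matching has ${}^0R_{\gamma}=n>(n-1)+(n-1)^{\gamma}$ for $n>2$), so $E_{n,c}$ must be read as connected graphs and the reduction must preserve connectivity. This is exactly why I would phrase the reduction as adding the surplus edges onto a fixed spanning tree via Lemma \ref{el}(2), rather than deleting edges from $G$: naive edge deletion could sever a bridge and push the graph out of the class, whereas the edge-addition viewpoint never leaves a connected graph and sidesteps the bridge issue entirely.
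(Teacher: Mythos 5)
Your proof is correct and follows essentially the same route as the paper: reduce an arbitrary $G\in E_{n,c}$ to a spanning tree via the monotonicity of Lemma~\ref{el}(2), then invoke the result of \cite{lz} that $S_n$ uniquely maximizes ${}^0R_{\gamma}$ among $n$-vertex trees, noting $S_n\in E_{n,c}$. The only difference is directional — the paper deletes surplus edges from $G$ while you add them back onto a fixed spanning tree — and your framing has the minor advantage of automatically avoiding the question of whether deleting a bridge leaves the (connected) class.
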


\section{Acknowledgment}
This research is supported by the UPAR Grant of United Arab Emirates,
Al-Ain, UAE via Grant No. G00002590.

\end{document}